\newtheorem*{remark}{Remark}
\newtheorem{theorem}{Theorem}
\newtheorem{lemma}[theorem]{Lemma}
\newtheorem{assumption}{Assumption}
\newtheorem{mydef}{Definition}
\newcommand{\norm}[1]{\left\lVert#1\right\rVert}
\author{Jaskaran Grover$^{1}$, Changliu Liu$^{1}$, Katia Sycara $^{1}$
\thanks{$^{1}$J. Grover, C. Liu, K. Sycara are with the Robotics Institute at
		Carnegie Mellon University, 5000 Forbes Avenue, Pittsburgh, PA 15213, USA.
		{\tt\small \{jaskarag,cliu6,sycara\}@andrew.cmu.edu}}}%
\begin{document}
	\title{\LARGE \bf
		Parameter Identification for Multirobot Systems Using Optimization-based Controllers }
	\date{}
	\maketitle
\begin{abstract}
This paper considers the problem of parameter identification for a multirobot system. We wish to understand when is it feasible for an adversarial observer to reverse-engineer the parameters of tasks being performed by a team of robots by simply observing their positions. We address this question by using the concept of \textit{persistency of excitation} from system identification. Each robot in the team uses optimization-based controllers for mediating between task satisfaction and collision avoidance. These controllers exhibit an \textit{implicit} dependence on the task's parameters which poses a hurdle for deriving necessary conditions for parameter identification, since such conditions usually require an explicit relation. We address this bottleneck by using duality theory and SVD of active collision avoidance constraints and derive an explicit relation between each robot's task parameters and its control inputs. This allows us to derive the main necessary conditions for successful identification which agree with our intuition. We demonstrate the importance of these conditions through numerical simulations by using (a) an adaptive observer and (b) an unscented Kalman filter for goal estimation in various geometric settings. These simulations show that under circumstances where parameter inference is supposed to be infeasible per our conditions, both these estimators fail and likewise when it is feasible, both converge to the true parameters. Videos of these results are available at \url{https://bit.ly/3kQYj5J}
\end{abstract}
\section{Introduction}
\balance
There has been significant research on task-based motion planning and control synthesis for multiple robots, for applications involving search and rescue \cite{kantor2003distributed}, sensor coverage \cite{cortes2004coverage} and environmental exploration \cite{burgard2005coordinated}. Global behaviors result from executing local controllers on individual robots interacting with their neighbors \cite{ogren2002control}, \cite{olfati2007consensus}.  The inverse problem for task based control is task inference  \cite{byravan2015graph}, which is the subject of this paper. 
Our focus is on multirobot task identification by an adversary observing robots performing some tasks. Specifically, we want to understand how easy it is  for the observer to infer parameters of the tasks being performed by the robots, using just their positions.  Such inference can provide an avenue to the adversary to impede task execution.

Consequently, from the perspective of robots, the question is how can they coordinate their motions so as to obfuscate their parameters. On the other hand, for the observer, the question is when is this model-fitting problem well-posed \textit{i.e.} when are the positions of the robots ``rich enough" so that they suffice to reveal parameters of their underlying tasks.  In this paper, we take the view of the observer monitoring a multirobot system in which each robot is tasked with reaching a goal position while avoiding collisions with other robots. The robots use optimization-based controllers which have demonstrated great success for the goal stabilization and avoidance control problem that we are considering \cite{wang2017safety,grover2019deadlock}. \textit{The observer's problem is to estimate goal locations and controller gains of all robots using their positions over some finite time}.  We use goal and gain estimation as an example to illustrate the mechanics of our approach; it can be easily adapted to identify parameters of a different task as well.

The development of provably convergent parameter estimation algorithms for nonlinear systems  has been studied extensively \cite{na2015robust},\cite{adetola2010performance},\cite{hartman2012robust}. These algorithms have been used for identifying parameters of manipulators \cite{yang2018adaptive} and quadrotors \cite{zhao2018online}. We consider these robots ``monolithic" since they do not interact with other agents during the parameter identification phase.  Moreover, these algorithms require explicit relations between plant parameters and dynamics to derive conditions under which these estimators converge.  While deriving such identifiability conditions is our incentive as well, the plant that we focus on has some unique features. It is composed of (a) mutually interacting robots; and (b) each robot uses optimization-based controllers. Interactions among robots pose a challenge to identification because at any given time, the dynamics of an ego robot are governed by the set of robots currently neighboring the ego which is time-varying.  Secondly, having an optimization generate the control input for each robot causes the robot's dynamics to depend \textit{implicitly} on task parameters through the optimization's objective, which prohibits straightforward application of the identifiability conditions derived in these works.
 
To address these challenges, we first provide some background on parameter identification in section \ref{AdaptiveObserverAlgorithm}. Using \textit{persistency of excitation} \cite{adetola2010performance,ioannou2012robust}, we derive a new necessary condition for successful identification in lemma \ref{lemma1}.  In section \ref{ControlReview}, we review multirobot avoidance control using optimization-based controllers and formalize the identification problem for this system in section \ref{ProblemFormulation}.  The main contributions begin from section \ref{KKT Conditions} where we derive the KKT conditions of the control-synthesis optimization. By focusing on the set of active interactions (\textit{i.e.} active constraints) of a robot with other robots, we pose an equality-constrained optimization (EQP) which is the first step for deriving a relation between parameters and the control.  In section \ref{SVD}, we classify each robot's dynamics based on the number of constraints in this EQP, and linear independence relations amongst these constraints. Taking the SVD of these constraints allows us to derive the explicit relation between the parameters and dynamics of each robot that we wanted. Finally, using these relations in conjunction with the \textit{persistency of excitation} requirement and the result derived in lemma \ref{lemma1}, we provide the main neceessary conditions for successful task identification of the multirobot system (theorems (\ref{theorem_caseA}-\ref{theorem_caseD})).  The message that these theorems convey is that as the number of robots that an ego robot interacts with increases, estimation of ego's parameters becomes difficult. This confirms our intuition, because with more interactions, the ego robot's motion (that the observer measures) is expended in satisfying collision avoidance constraints which it achieves by sacrificing task performance.  We demonstrate this numerically in section \ref{Results}, where we use an adaptive observer and a UKF to infer robots' goals using their positions under various geometric settings.   We conclude in \ref{Conclusions} by summarizing and provide directions for future work.

\section{Observer based Parameter Identification}
\label{AdaptiveObserverAlgorithm}
While there exist several parameter estimation algorithms that the observer can potentially leverage (such as RLS \cite{edgar2010recursive}, UKF \cite{thrun2002probabilistic}), we focus on observer based methods borrowing ideas from \cite{adetola2008finite,adetola2010performance,na2015robust}  because they provide conditions under which exponential convergence (and even finite-time convergence) to the true parameters is achieved.
Consider a nonlinear parameter-affine system as follows
\begin{align}
\label{original}
    \dot{\boldsymbol{x}}=G(\boldsymbol{x})\boldsymbol{\theta} + f(\boldsymbol{x}),
\end{align}
where $\boldsymbol{x} \in \mathbb{R}^n$ is the measurable state, $\boldsymbol{\theta}\in\mathbb{R}^p$ is the unknown parameter and $G(\boldsymbol{x}):\mathbb{R}^{n} \longrightarrow \mathbb{R}^{n \times p}$, $f(\boldsymbol{x}):\mathbb{R}^{n} \longrightarrow\mathbb{R}^{n}$ are known functions. In our context, $\boldsymbol{x}(t)$ will correspond to the position of the ego robot under observation and $\boldsymbol{\theta}$ denotes the task parameters of its controller that we wish to infer. We assume that the observer runs several parallel estimators synchronously, one for estimating the parameters of each robot, so the focus here is on the ego robot. The observer's problem is to design an estimation law $\dot{\hat{\boldsymbol{\theta}}}=\psi(\hat{\boldsymbol{\theta}},\boldsymbol{x})$ that guarantees convergence of $\hat{\boldsymbol{\theta}} \longrightarrow \boldsymbol{\theta}$ by using $\boldsymbol{x}(t)$ over some $t \in[0,T]$ where $T$ is large enough. Consider a state predictor defined analogously to \eqref{original}
\begin{align}
\label{predictor}
    \dot{\hat{\boldsymbol{x}}}=G(\boldsymbol{x})\boldsymbol{\theta}^0 + f(\boldsymbol{x}) + k_w(\boldsymbol{x}-\hat{\boldsymbol{x}}), \mbox{    }\hat{\boldsymbol{x}}(0) = \boldsymbol{x}(\boldsymbol{0}),
\end{align}
where $\boldsymbol{\theta}^0 \in \mathbb{R}^p$ is a nominal initial estimate of $\boldsymbol{\theta}$ and $k_w>0$. Define an auxillary variable $\boldsymbol{\eta} \in \mathbb{R}^n$ as follows
\begin{align}
\label{eta}
    \boldsymbol{\eta} = \boldsymbol{x}-\hat{\boldsymbol{x}}-W(\boldsymbol{\theta}-\boldsymbol{\theta}^0)
\end{align}
where $W \in \mathbb{R}^{n \times p}$ is generated according to
\begin{align}
\label{W}
    \dot{W} = -k_wW + G(\boldsymbol{x}), \mbox{    } W(0)=0.
\end{align}
Here $W$ is a low-pass filtered version of $G(\boldsymbol{x})$. While $\boldsymbol{\eta}$ as defined in \eqref{eta} is not measurable because it depends on $\boldsymbol{\theta}$ that is unknown, defining $W$ as in \eqref{W}  lets us generate $\boldsymbol{\eta}$ using
\begin{align}
\label{etadot}
\dot{\boldsymbol{\eta}} = -k_w\boldsymbol{\eta}, \mbox{    } \boldsymbol{\eta}(0)=\boldsymbol{x}(0)-\hat{\boldsymbol{x}}(0).
\end{align}
Based on \eqref{original}-\eqref{etadot}, let $Q \in \mathbb{R}^{p \times p}$ and $C\in \mathbb{R}^{p}$ be generated according to the following dynamics
\begin{align}
\label{QCdef}
    \dot{Q} &= W^TW,\mbox{    } Q(0) = 0^{p \times p},  \\
    \dot{C} &= W^T(W\boldsymbol{\theta^0}+\boldsymbol{x}-\hat{\boldsymbol{x}}-\boldsymbol{\eta}), C(0) = 0^{p \times 1},
\end{align}
and let $t_c$ be the time at which $Q(t_c) \succ 0$, then the following parameter update law 
\begin{align}
\label{parameter_update_law}
    \dot{\hat{\boldsymbol{\theta}}}=\Gamma(C - Q\hat{\boldsymbol{\theta}}), \mbox{    } \hat{\boldsymbol{\theta}}(0)=\boldsymbol{\theta}^0,
\end{align}
for $\Gamma \succ 0$ guarantees that $\norm{\hat{\boldsymbol{\theta}}-\boldsymbol{\theta}}$ is non-increasing for $0\leq t \leq t_c$ and exponentially converges to 0 for $t>t_c$.
Thus, as long as there exists  $t_c$ at which $Q(t_c) \succ 0$, convergence of the estimate $\hat{\boldsymbol{\theta}}$ to the true parameter $\boldsymbol{\theta}$ is guaranteed.
\begin{mydef}[Persistency of Excitation \cite{willems2005note}]
\label{PoE}
A bounded, locally square integrable function $\Phi:\mathbb{R}^+ \longrightarrow \mathbb{R}^{n}$ is said to be persistently exciting (\textbf{PE}) if there exist constants $T > 0,\epsilon>0$ such that $\int_{t}^{t+T}\Phi(s)\Phi^T(s)ds \succcurlyeq \epsilon I$ $\forall t \geq 0$
\end{mydef}
\begin{mydef}[Interval Excitation \cite{yang2018adaptive}] 
\label{IE}
A bounded, locally square integrable function $\Phi:\mathbb{R}^+ \longrightarrow \mathbb{R}^{n}$ is said to be interval exciting (\textbf{IE}) if there exist constants $T_0 > 0,\epsilon>0$ such that $\int_{0}^{T_0}\Phi(s)\Phi^T(s)ds \succcurlyeq \epsilon I$
\end{mydef}

\begin{remark}
\label{R2}
The condition that $Q(t_c)\succ 0$ is equivalent to the \textbf{IE} condition presented in Def. \eqref{IE} for $T_0\coloneqq t_c$. Indeed by defining $\Phi(t)\coloneqq W^T(\boldsymbol{x}(t))$, we get that $W^T(\boldsymbol{x}(t))$ is \textbf{IE} iff 
$\int_{0}^{t_c}W^T(\boldsymbol{x}(s))W(\boldsymbol{x}(s))ds \succ 0 \iff Q(t_c) \succ 0$ since $Q(t)= \int_{0}^{t}W^T(\boldsymbol{x}(s))W(\boldsymbol{x}(s))ds$ using \eqref{QCdef}
\end{remark}
\hspace{-0.37cm}Since $W(\boldsymbol{x}(t))$ is a low-pass filtered $G(\boldsymbol{x}(t))$ \eqref{W}, $W^T(\boldsymbol{x}(t))$ is \textbf{IE} only when $G^T(\boldsymbol{x}(t))$ is \textbf{IE} \cite{narendra2012stable}. Therefore, $G^T(\boldsymbol{x}(t)) $ is \textbf{IE} implies  existence of $t_c$ such that $Q(t_c)\succ 0$ \textit{i.e.}
\begin{align}
\label{GisPE}
\int_{0}^{t_c}G^T(\boldsymbol{x}(s))G(\boldsymbol{x}(s))ds \succcurlyeq \epsilon I 
\implies  \exists t_c \mid Q(t_c)\succ 0.
\end{align}
Next, we derive a new necessary condition which is required for $G^T(\boldsymbol{x}(t)) $ to be \textbf{IE}.
\begin{lemma}
\label{lemma1}
Let $\mathcal{N}(G(\boldsymbol{x})) \subset \mathbb{R}^{p}$ denote the null space of $G(\boldsymbol{x})$, then a necessary condition for $G^T(\boldsymbol{x}(t))$ to be \textbf{IE}, is that $\mathcal{N}(G(\boldsymbol{x}))$ must \textbf{not} be time-invariant.
\end{lemma}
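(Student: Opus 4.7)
The plan is to establish the contrapositive: assume $\mathcal{N}(G(\boldsymbol{x}(t)))$ is time-invariant and exhibit a fixed direction along which the IE Gram matrix is identically zero, thereby precluding IE. The starting observation is that, taking $\Phi(t) \coloneqq G^T(\boldsymbol{x}(t))$ in Definition~\ref{IE}, the IE condition is equivalent to the existence of $T_0,\epsilon>0$ such that $M(T_0) \coloneqq \int_0^{T_0} G^T(\boldsymbol{x}(s))G(\boldsymbol{x}(s))\,ds \succcurlyeq \epsilon I$. Hence it suffices to produce a single nonzero $\boldsymbol{v}\in\mathbb{R}^p$ for which $\boldsymbol{v}^T M(T_0)\boldsymbol{v}=0$ for every $T_0>0$, because this forces a zero eigenvalue of $M(T_0)$ and rules out $M(T_0)\succcurlyeq \epsilon I$ for any $\epsilon>0$.

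The construction of $\boldsymbol{v}$ is immediate from the hypothesis: pick any nonzero $\boldsymbol{v}\in\mathcal{N}(G(\boldsymbol{x}(t)))$, which is a \emph{time-independent} subspace by assumption, and which is nontrivial in the regime of interest (the paper considers $p>n$, so $G(\boldsymbol{x})\in\mathbb{R}^{n\times p}$ necessarily has null space of dimension at least $p-n\geq 1$). Then by construction $G(\boldsymbol{x}(s))\boldsymbol{v}=0$ for every $s\geq 0$, so
\begin{equation*}
\boldsymbol{v}^T M(T_0)\boldsymbol{v} = \int_0^{T_0}\norm{G(\boldsymbol{x}(s))\boldsymbol{v}}^2\,ds = 0 \quad \text{for all } T_0>0,
\end{equation*}
which is exactly the obstruction required. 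The contrapositive is complete.

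I do not foresee any substantive obstacle; the whole argument rests on the elementary fact that the symmetric positive-semidefinite integral $\int G^T G$ inherits every persistent null direction of $G$. The only point of care is the tacit assumption that $\mathcal{N}(G(\boldsymbol{x}))$ is nontrivial — otherwise ``time-invariant null space'' trivially covers the case $\mathcal{N}(G)\equiv\{0\}$, which is perfectly compatible with IE and would make the lemma false as stated. In the paper's setting this is automatic because $\boldsymbol{\theta}$ is higher-dimensional than the measurable state $\boldsymbol{x}$, but it would be cleanest to state this dimensional assumption alongside the lemma or to note it explicitly at the start of the proof.
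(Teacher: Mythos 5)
Your proof is correct and is essentially the paper's own argument, merely phrased as a contrapositive instead of a contradiction: fix a constant nonzero $\textbf{v}$ in the time-invariant null space and note that $\textbf{v}^T\left(\int_0^{T_0}G^T(\boldsymbol{x}(s))G(\boldsymbol{x}(s))\,ds\right)\textbf{v}=\int_0^{T_0}\norm{G(\boldsymbol{x}(s))\textbf{v}}^2 ds=0$ for every $T_0$, which rules out $\succcurlyeq\epsilon I$. One small correction to your closing remark: the paper does not assume $p>n$ (gain estimation has $p=1<n=2$), so nontriviality of the null space is not automatic; the hypothesis should simply be read, as the paper does, as the existence of a constant nonzero null vector.
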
 
\begin{proof}
We prove this lemma by contradiction. Let $\boldsymbol{v}(\boldsymbol{x}(t)) \in \mathcal{N}(G(\boldsymbol{x}(t)))$ and assume that $\boldsymbol{v}(\boldsymbol{x}(t))$ is time-invariant \textit{i.e.} $\boldsymbol{v}(\boldsymbol{x}(t))\equiv \textbf{v} \in \mathbb{R}^p$ for some constant non-zero vector $\textbf{v}$. Let $T>0$, then we have that
\begin{align}
r&=\textbf{v}^T\bigg(\int_{0}^{T}G^T(\boldsymbol{x}(s))G(\boldsymbol{x}(s))ds\bigg)\textbf{v} \nonumber \\
    &=\int_{0}^{T}\bigg(\textbf{v}^TG^T(\boldsymbol{x}(s))G(\boldsymbol{x}(s))\textbf{v}\bigg)ds \nonumber \\
    &=0 \mbox{ } \forall t>0.
\end{align}
Since we assumed that $\textbf{v}\neq \boldsymbol{0}$ and $T$ was arbitrary,
\begin{align}
r=0 &\implies \int_{0}^{T}G^T(\boldsymbol{x}(s))G(\boldsymbol{x}(s))ds \nsucceq \epsilon I  \mbox{ } \forall t,\epsilon>0 \nonumber \\
&\implies G(\boldsymbol{x}(t)) \mbox{ is  not \textbf{IE} } \nonumber.
\end{align}
Since existence of such a $\textbf{v}$ implies $G^T(\boldsymbol{x}(t))$ is not \textbf{IE}, therefore, $\nexists \mbox{ } t_c$ for which $Q(t_c) \succ 0$.
\end{proof}
\hspace{-0.38cm}Consequently, failure to obtain positive-definiteness of $Q(t)$ prevents unique identification of  $\boldsymbol{\theta}$ using \eqref{parameter_update_law}. What is the intuition for this result? Recall from \eqref{original} that the dynamics depend on the true parameter $\boldsymbol{\theta}$ affinely through $G(\boldsymbol{x})$. A time-invariant vector $\textbf{v} \in \mathcal{N}(G(\boldsymbol{x}(t)))$ qualitatively represents a pathological parameter that does not influence the dynamics because $G(\boldsymbol{x}(t))\textbf{v}=\boldsymbol{0}$ and by extension, also does not influence the measurements $\boldsymbol{x}(t)$. Said another way, suppose $\boldsymbol{\theta}$ is the true parameter of the system and let $\boldsymbol{\theta} + \alpha \textbf{v}$  denote an arbitrary parameter for some $\alpha \in \mathbb{R}$. Then, the following calculation shows that either of these parameters result in the same observed dynamics $\dot{\boldsymbol{x}}(t)$ for any $ t$  for a given initial condition $\boldsymbol{x}(0)$, because $G(\boldsymbol{x}(t))\textbf{v}=\boldsymbol{0}$:
\begin{align}
\label{original2}
    \dot{\boldsymbol{x}}&=G(\boldsymbol{x})(\boldsymbol{\theta}+\alpha\textbf{v}) + f(\boldsymbol{x})\nonumber \\
    &=G(\boldsymbol{x})\boldsymbol{\theta}+\alpha 
    G(\boldsymbol{x})\textbf{v}
    + f(\boldsymbol{x})\nonumber  \\
    &=G(\boldsymbol{x})\boldsymbol{\theta} + f(\boldsymbol{x})\nonumber 
\end{align}
As a result, the observed measurements $\boldsymbol{x}(t)$ would be identical for either choice of parameters (\textit{i.e.} $\boldsymbol{\theta}$ or $\boldsymbol{\theta} + \alpha \textbf{v}$). Therefore, unique identification of the true $\boldsymbol{\theta}$ solely based on these measurements is not possible. 
 
For our application involving parameter estimation for robots, the high-level task and the resulting dynamics of each robot govern the specific form of this condition. In this paper, the task for each robot is to navigate to a goal location while avoiding collisions with the other robots in the system. We use this task as an example to show the mechanics of our inference approach which can be easily adapted to infer parameters of a different type of task as well. We assume that each robot uses optimization-based controllers to achieve this. Most optimization-based controllers ultimately use a Quadratic Program (QP) to compute the control \cite{wei2019safe}. As an example, our framwork uses Control Barrier Function based QPs, just to demonstrate the approach, but it generalizes to any other QP-based controller as well.
\section{Avoidance Control for Multirobot Systems}
\label{ControlReview}
We refer the reader to \cite{wang2017safety} for a formulation for CBF-QP for the multirobot goal-stabilization and collision avoidance problem. We present a brief summary here. Let there be a total of $M+1$ robots in the system. From the perspective of an ego robot, the remaining $M$ robots are ``cooperative obstacles" who share the responsibility of avoiding collisions with the ego robot, while navigating to their own goals.  In the following, the focus is on the ego robot. This robot follows single-integrator dynamics \textit{i.e.}
\begin{align}
\dot{\boldsymbol{x}} = \boldsymbol{u},
\end{align}
where $\boldsymbol{x}=(p_x,p_y)\in\mathbb{R}^2$ is its position and $\boldsymbol{u}\in\mathbb{R}^2$ is its velocity (\textit{i.e.} the control input).  Assume at a given time, the other robots $\mathcal{O}=\{O_j\}_{j=1}^M$ are located at positions $\{\boldsymbol{x}^o_j\}_{j=1}^{M}$ respectively. 
The ego robot must reach a goal position $\boldsymbol{x}_{d} \in \mathbb{R}^2$ while avoiding collisions with $O_j$ $\forall j\in \{1,2,\cdots,M\}$. The ego robot can use a nominal proportional controller $\hat{\boldsymbol{u}}(\boldsymbol{x})=-k_{p}(\boldsymbol{x}-\boldsymbol{x}_{d})$ that guarantees stabilization towards $\boldsymbol{x}_{d}$. Here $k_{p}>0$ is the controller gain.  For collision avoidance, the ego robot must maintain a distance of at-least $D_s$ with $O_j$ $\forall j\in \{1,2,\cdots,M\}$ \textit{i.e.} their positions $(\boldsymbol{x},\boldsymbol{x}^o_j)$ must satisfy $ \norm{\Delta \boldsymbol{x}_{j}}^2\geq D_s^2$ where $\Delta \boldsymbol{x}_{j} \coloneqq \boldsymbol{x}-\boldsymbol{x}^o_j$ and $D_s$ is a desired safety margin. 

To combine the  collision avoidance requirement with the goal stabilization objective, the ego robot solves a QP that computes a controller closest to the prescribed control $\hat{\boldsymbol{u}}(\boldsymbol{x})=-k_{p}(\boldsymbol{x}-\boldsymbol{x}_{d})$ and satisfies $M$ collision avoidance constraints as follows:
\begin{align}
\label{optimization_formulation_1_static_obstacles}
\begin{aligned}
\boldsymbol{u}^*&= \underset{\boldsymbol{u}}{\arg\min}
& & \norm{\boldsymbol{u} - \hat{\boldsymbol{u}}(\boldsymbol{x})}^2 \\
& \text{subject to}
& & A(\boldsymbol{x})\boldsymbol{u} \leq \boldsymbol{b}(\boldsymbol{x})  
\end{aligned}
\end{align}
Here $A(\boldsymbol{x})\in \mathbb{R}^{M\times 2}$, $\boldsymbol{b}(\boldsymbol{x})\in \mathbb{R}^{M}$ are defined such that the $j^{th}$ row of $A$ is $\boldsymbol{a}^T_j$ and the $j^{th}$ element of $\boldsymbol{b}$ is $b_j$:
\begin{align}
\label{Ab_static}
\boldsymbol{a}^T_j(\boldsymbol{x}) &\coloneqq-\Delta \boldsymbol{x}^T_{j}=-(\boldsymbol{x}-\boldsymbol{x}^o_j)^T \nonumber \\
b_j(\boldsymbol{x}) &\coloneqq \frac{\gamma}{2} (\norm{\Delta \boldsymbol{x}_{j}}^2-D_s^2) \mbox{   }\forall j \in \{1,2,\dots,M\}
\end{align}
The ego robot locally solves this QP at every time step, to determine its optimal control $\boldsymbol{u}^*$, which ensures collision avoidance while encouraging motion towards the goal $\boldsymbol{x}_d$.

Aside from its position $\boldsymbol{x}$, the ego robot's control $\boldsymbol{u}^*$ depends on task parameters defined to be the position of the goal \color{red}$\boldsymbol{x}_d$\color{black}, and the nominal controller's gain \color{red}$k_p$\color{black}.  This is implicitly encoded through the cost function of  \eqref{optimization_formulation_1_static_obstacles} (recall $\hat{\boldsymbol{u}}(\boldsymbol{x})=-\color{red}k_{p}\color{black}(\boldsymbol{x}-\color{red}\boldsymbol{x}_{d}\color{black})$). To highlight this dependence, we denote the control as $\boldsymbol{u}^*_{\boldsymbol{\theta}}(\boldsymbol{x})$ where $\boldsymbol{\theta}$ are the unknown parameters the observer aims to identify.  Next, we formulate the problem that the observer seeks to solve.
\section{Task Identification Problem Formulation}
\label{ProblemFormulation}
The problem for the observer is to monitor the positions of the ego robot $\boldsymbol{x}(t)$ and the ``cooperative obstacles"   $\boldsymbol{x}^o_j(t) \mbox { } \forall j \in \{1,2,\cdots,M\}$ over $t \in [0,T]$ and infer $\boldsymbol{x}_d,k_p$ based on these measurements. Here $T$ is assumed to be large enough.  The observer will run $M+1$ parallel estimators  to identify $\{\boldsymbol{x}^i_d,k^i_p\}_{i=1}^{M+1}$ for each robot in the team. Even though \eqref{original} to \eqref{parameter_update_law} allow for simultaneous  identification of $\boldsymbol{x}_d$ and $k_p$, we focus on decoupled identification  \textit{i.e.} identify (a)  $\boldsymbol{x}_d$ assuming $k_p$ is known ($\boldsymbol{\theta}=\boldsymbol{x}_d$) and (b) $k_p$ assuming $\boldsymbol{x}_d$ is known ($\boldsymbol{\theta}=k_p$) . This is just to keep the analysis simple, joint identification is a straightforward extension.

To estimate $\boldsymbol{\theta}$ using the algorithm in section \ref{AdaptiveObserverAlgorithm} (\eqref{original}- \eqref{parameter_update_law}) and to compute the condition in lemma \ref{lemma1}, the observer requires explicit dynamics of the form $\dot{\boldsymbol{x}}=G(\boldsymbol{x})\boldsymbol{\theta} + f(\boldsymbol{x})$ in \eqref{original}. That is, it must know  $G(\boldsymbol{x})$ and $f(\boldsymbol{x})$. However, owing to the fact that $\dot{\boldsymbol{x}}=\boldsymbol{u}^*_{\boldsymbol{\theta}}(\boldsymbol{x})$ is optimization-based \eqref{optimization_formulation_1_static_obstacles}, such explicit relations are not known. In the next section, we derive the KKT conditions of \eqref{optimization_formulation_1_static_obstacles} which is the first step to derive these expressions. There are a few assumptions first:
\begin{assumption}
	\label{ass1}
	The observer knows the form of safety constraints $A(\boldsymbol{x}),\boldsymbol{b}(\boldsymbol{x})$ in \eqref{optimization_formulation_1_static_obstacles} and that the cost function is of the form $\norm{\boldsymbol{u} - \hat{\boldsymbol{u}}(\boldsymbol{x})}^2$.
\end{assumption}
\begin{assumption}
\label{ass3}
The observer can measure both the position $\boldsymbol{x}(t)$ and velocity (\textit{i.e.} the control $\boldsymbol{u}^*_{\boldsymbol{\theta}}(\boldsymbol{x}(t))$ of the ego robot.
\end{assumption}
Assumption \ref{ass1} is needed since we are interested in deriving the necessary identifiability conditions and they require knowledge of the dynamics. Assumption \ref{ass3}  is not restrictive in practice because positions are easily measurable and velocities can be obtained through numerical differentiation. 
\section{Analysis using KKT conditions}
\label{KKT Conditions}
To analyze the relation between the optimizer of \eqref{optimization_formulation_1_static_obstacles}  \textit{i.e.} $\boldsymbol{u}^*_{\boldsymbol{\theta}}(\boldsymbol{x})$ and parameters $\boldsymbol{\theta}$, we look at the KKT conditions of this QP. These are necessary and sufficient conditions satisfied by $\boldsymbol{u}^*_{\boldsymbol{\theta}}(\boldsymbol{x})$. The Lagrangian for \eqref{optimization_formulation_1_static_obstacles} is
\begin{align}
L(\boldsymbol{u},\boldsymbol{\mu}) =  \norm{\boldsymbol{u} - \hat{\boldsymbol{u}}}^2_2  + \boldsymbol{\mu}^T(A\boldsymbol{u}-\boldsymbol{b}) \nonumber.
\end{align}
Let $(\boldsymbol{u}^*_{\boldsymbol{\theta}},\boldsymbol{\mu}^*_{\boldsymbol{\theta}})$ be the optimal primal-dual solution to  \eqref{optimization_formulation_1_static_obstacles}. The KKT conditions are \cite{boyd2004convex}:
\begin{enumerate}
	\item Stationarity: $\nabla_{\boldsymbol{u}}L(\boldsymbol{u},\boldsymbol{\mu})\vert_{(\boldsymbol{u}^*_{\boldsymbol{\theta}},\boldsymbol{\mu}^*_{\boldsymbol{\theta}})} = 0$,
	\begin{align}
	\label{stationarity1}
		\implies \boldsymbol{u}^*_{\boldsymbol{\theta}} &= \hat{\boldsymbol{u}} - \frac{1}{2}\sum_{j \in \{1,\cdots,M\}}\mu^*_{j{\boldsymbol{\theta}}}\boldsymbol{a}_{j}
	\nonumber \\
    &= \hat{\boldsymbol{u}} - \frac{1}{2} A^T \boldsymbol{\mu}^*_{\boldsymbol{\theta}} .
	\end{align}
\item Primal Feasibility 
\begin{align}
\label{primal_feasibility1}
A\boldsymbol{u}^*_{\boldsymbol{\theta}}\leq \boldsymbol{b} \iff \boldsymbol{a}^T_{j}\boldsymbol{u}^*_{\boldsymbol{\theta}} \leq b_{j} \mbox{ }\forall j \in \{1,\cdots,M\}.
\end{align}
\item Dual Feasibility 
\begin{align}
\label{dual_feasibility1}
{\mu^*_{j{\boldsymbol{\theta}}}} \geq 0 \mbox{  }	 \forall j \in \{1,2,\cdots,M\}.
\end{align}
\item Complementary Slackness 
\begin{align}
	\label{complimentarty slackness1}
	\mu^*_{j{\boldsymbol{\theta}}} \cdot (\boldsymbol{a}^T_{j}\boldsymbol{u}^*_{\boldsymbol{\theta}} -b_{j}) = 0 
	 \mbox{   }\forall j \in \{1,2,\cdots,M\}.
\end{align}
\end{enumerate}
We define the set of active and inactive constraints as
\begin{align}
\label{activeinactive}
	\mathcal{A}(\boldsymbol{u}^*_{\boldsymbol{\theta}}) \coloneqq \{j \in \{1,2,\cdots,M\} \mid \boldsymbol{a}^T_{j}\boldsymbol{u}^*_{\boldsymbol{\theta}} = b_{j} \}, \\
	\mathcal{IA}(\boldsymbol{u}^*_{\boldsymbol{\theta}}) \coloneqq \{j \in \{1,2,\cdots,M\} \mid \boldsymbol{a}^T_{j}\boldsymbol{u}^*_{\boldsymbol{\theta}} < b_{j} \} .
\end{align}
The set of active constraints qualitatively represents those other robots that the ego robot ``worries" about for collisions. From the perspective of the ego robot, we will simply refer to the ``other robots" as \textit{obstacles}. Let there be a total of $K$ active constraints \textit{i.e.}  $\texttt{card}(\mathcal{A}(\boldsymbol{u}^*_{\boldsymbol{\theta}}))=K$ where $K\in \{0,1,\cdots,M\}$. Using \eqref{dual_feasibility1} and \eqref{complimentarty slackness1}, we deduce
\begin{align}
	\mu^*_{j\boldsymbol{\theta}} = 0 \mbox{ $\forall j $} \in \mathcal{IA}(\boldsymbol{u}^*_{\boldsymbol{\theta}}).
\end{align}  
Therefore, we can restrict the summation in \eqref{stationarity1} only to the set of active constraints \textit{i.e.}
\begin{align}
\label{kkt_general}
	\boldsymbol{u}^*_{\boldsymbol{\theta}} &= \hat{\boldsymbol{u}} - \frac{1}{2}\sum_{j \in \mathcal{A}(\boldsymbol{u}^*_{\boldsymbol{\theta}})}\mu^*_{j{\boldsymbol{\theta}}}\boldsymbol{a}_{j}
	\nonumber \\
	&= \hat{\boldsymbol{u}} - \frac{1}{2} A_{ac}^{T} \boldsymbol{\mu}^{ac}_{\boldsymbol{\theta}} .
\end{align}
where $A_{ac}(\boldsymbol{x})\in \mathbb{R}^{K \times 2}$ is the matrix formed using the rows of $A$ that are indexed by the active set $\mathcal{A}(\boldsymbol{u}^*_{\boldsymbol{\theta}})$, and likewise $\boldsymbol{\mu}^{ac}_{\boldsymbol{\theta}}\coloneqq\{\mu^*_{j\boldsymbol{\theta}}\}_{j \in \mathcal{A}(\boldsymbol{u}^*_{\boldsymbol{\theta}})}$. Similarly, let $\boldsymbol{b}_{ac}(\boldsymbol{x})\in \mathbb{R}^{K}$ denote the vector formed from the elements of $\boldsymbol{b}$ indexed by $\mathcal{A}(\boldsymbol{u}^*_{\boldsymbol{\theta}})$. By deleting all inactive constraints and retaining only the active constraints, we can pose another QP that consists only of active constraints, whose solution is the same as that of \eqref{optimization_formulation_1_static_obstacles}. This equality-constrained program (EQP) is given by
\begin{align}
\label{optimization_formulation_2_static_obstacles}
	\begin{aligned}
		\boldsymbol{u}^*&= \underset{\boldsymbol{u}}{\arg\min}
		& & \norm{\boldsymbol{u} - \hat{\boldsymbol{u}}(\boldsymbol{x})}^2 \\
		& \text{subject to}
		& & A_{ac}(\boldsymbol{x})\boldsymbol{u} = \boldsymbol{b}_{ac}(\boldsymbol{x})  
	\end{aligned}
\end{align}
Note that the system $A_{ac}(\boldsymbol{x})\boldsymbol{u} = \boldsymbol{b}_{ac}(\boldsymbol{x})$ is always consistent by construction because of \eqref{activeinactive}, as long as a solution $\boldsymbol{u}^*_{\boldsymbol{\theta}}$ to \eqref{optimization_formulation_1_static_obstacles} exists. 
Now why do we care for this EQP? That is because it is easier to derive an expression  $\boldsymbol{u}^*_{\boldsymbol{\theta}}(\boldsymbol{x})=G(\boldsymbol{x})\boldsymbol{\theta}+f(\boldsymbol{x})$ for  \eqref{optimization_formulation_2_static_obstacles} than the inequality constrained problem \eqref{optimization_formulation_1_static_obstacles}. The only question is how to estimate the active set $\mathcal{A}(\boldsymbol{u}^*_{\boldsymbol{\theta}})$ to determine $A_{ac}(\boldsymbol{x}),b_{ac}(\boldsymbol{x})$ for \eqref{optimization_formulation_2_static_obstacles}. This can be done as follows. From Assumption \ref{ass3}, recall that the observer can measure both the position $\boldsymbol{x}(t)$ and velocity $\boldsymbol{u}^*_{\boldsymbol{\theta}}(\boldsymbol{x}(t))$ of the ego robot. Using these, the observer can determine $\mathcal{A}(\boldsymbol{u}^*_{\boldsymbol{\theta}})$ by comparing the residuals $\vert\boldsymbol{a}^T_{j}(\boldsymbol{x})\boldsymbol{u}^*_{\boldsymbol{\theta}} - b_{j}(\boldsymbol{x})\vert$ against a small threshold $\epsilon>0$ consistent with \eqref{activeinactive}:
\begin{align}
\label{activeobserver}
	\mathcal{A}^{observer} \coloneqq \{j \in \{1,2,\cdots,M\} \mid \vert\boldsymbol{a}^T_{j}(\boldsymbol{x})\boldsymbol{u}^*_{\boldsymbol{\theta}} - b_{j}(\boldsymbol{x})\vert < \epsilon \} \nonumber.
\end{align}
Clearly, computing	$\mathcal{A}^{observer} $ requires the observer to know $A(\boldsymbol{x})$ and $\boldsymbol{b}(\boldsymbol{x})$, hence the need for Assumption \ref{ass1}. At any rate, for a small enough threshold $\epsilon$, it holds true that $\mathcal{A}^{observer}=	\mathcal{A}(\boldsymbol{u}^*_{\boldsymbol{\theta}})$ consistent with \eqref{activeinactive}. This allows the observer to determine the active set.  In the next section, we work with \eqref{optimization_formulation_2_static_obstacles} to derive an explicit expression for control \textit{i.e.} $\boldsymbol{u}^*_{\boldsymbol{\theta}}(\boldsymbol{x})=G(\boldsymbol{x})\boldsymbol{\theta}+f(\boldsymbol{x})$ for various combinations of $\texttt{card}(\mathcal{A}(\boldsymbol{u}^*_{\boldsymbol{\theta}}))=K$ and $\texttt{rank}(A_{ac}(\boldsymbol{x}))$.

\section{SVD based Analysis of $A_{ac}(\boldsymbol{x})\boldsymbol{u} = \boldsymbol{b}_{ac}(\boldsymbol{x})  $}
\label{SVD}
The aim of this section is to derive relations between $\boldsymbol{u}^*_{\boldsymbol{\theta}}$  and $k_p,\boldsymbol{x}_d$ needed for identifying these parameters.  We will show  that the dependence of $\boldsymbol{u}^*_{\boldsymbol{\theta}}$ on these parameters banks on  $\texttt{rank}(A_{ac}(\boldsymbol{x}))$. Theorems \ref{theorem_caseA}, \ref{theorem_caseB} and \ref{theorem_caseC} \textit{roughly} state that whenever there is none or one obstacle for the ego robot to actively avoid, the control $\boldsymbol{u}^*_{\boldsymbol{\theta}}$ exhibits a well-defined dependence on these parameters making their inference using the estimation algorithm in section \ref{AdaptiveObserverAlgorithm} \textit{i.e.} equations \eqref{original}-\eqref{parameter_update_law} possible. On the other hand, theorem \ref{theorem_caseD} states that whenever there are too many obstacles active, the robot is consumed by collision avoidance constraints, so $\boldsymbol{u}^*_{\boldsymbol{\theta}}$ ``gives up" on optimizing the objective. Therefore, $\boldsymbol{u}^*_{\boldsymbol{\theta}}$ does not depend on $k_p,\boldsymbol{x}_d$ making their inference using \eqref{original}-\eqref{parameter_update_law}  impossible.  
\subsection{No active constraints \textit{i.e.} $K=0$}
\label{caseA}
When no constraint is active, we have $\mu_{j\boldsymbol{\theta}}=0 \mbox{ } \forall j \in \{1,2,\cdots,M\}$, so from \eqref{stationarity1} we get $\boldsymbol{u}^*_{\boldsymbol{\theta}}=\hat{\boldsymbol{u}}(\boldsymbol{x})=-k_p(\boldsymbol{x}-\boldsymbol{x}_d)$. Intuitively this means that the robot does not worry about collisions with any obstacle, so it is free to use $\hat{\boldsymbol{u}}$ itself. We write this in the parameter affine form \textit{i.e.} $\dot{\boldsymbol{x}}=\boldsymbol{u}^*_{\boldsymbol{\theta}}=-k_p(\boldsymbol{x}-\boldsymbol{x}_d) = G(\boldsymbol{x})\boldsymbol{\theta}+f(\boldsymbol{x})$ and derive conditions under which estimation is possible.
\begin{theorem}
\label{theorem_caseA}
If $\forall t \in [0,T]$, no constraint is active, then the observer can always estimate the goal using $\boldsymbol{x}(t),\boldsymbol{u}^*(\boldsymbol{x}(t)) \mbox{ } \forall t \in [0,T]$, assuming gain is known. Likewise, the observer can always estimate gain assuming goal is known, as long as the robot is not already at its goal.
\end{theorem}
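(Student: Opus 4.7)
My plan is to handle the two subcases separately, in each case instantiating the parameter-affine form $\dot{\boldsymbol{x}}=G(\boldsymbol{x})\boldsymbol{\theta}+f(\boldsymbol{x})$ and then verifying the interval excitation condition \eqref{GisPE} together with the null-space requirement of Lemma \ref{lemma1}. Since no constraint is active throughout $[0,T]$, we have the simple closed form $\boldsymbol{u}^*_{\boldsymbol{\theta}}(\boldsymbol{x})=-k_p(\boldsymbol{x}-\boldsymbol{x}_d)$, so both cases reduce to linear algebra on a single $G$.

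For goal estimation with $k_p$ known, I would take $\boldsymbol{\theta}=\boldsymbol{x}_d$, giving $G(\boldsymbol{x})=k_p I_2$ and $f(\boldsymbol{x})=-k_p\boldsymbol{x}$. Then $G^T G = k_p^2 I_2$ is a constant positive-definite matrix, so $\int_0^{t}G^T(\boldsymbol{x}(s))G(\boldsymbol{x}(s))\,ds = k_p^2 t\, I_2 \succ 0$ for every $t>0$. Hence $G^T(\boldsymbol{x}(t))$ is \textbf{IE} (in fact \textbf{PE}), which by \eqref{GisPE} guarantees the existence of $t_c$ with $Q(t_c)\succ 0$, so the update law \eqref{parameter_update_law} converges exponentially. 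The hypothesis of Lemma \ref{lemma1} is vacuously satisfied because $\mathcal{N}(G(\boldsymbol{x}))=\{\boldsymbol{0}\}$, so no pathological constant direction exists. Thus goal estimation always succeeds in this regime.

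For gain estimation with $\boldsymbol{x}_d$ known, I would take $\boldsymbol{\theta}=k_p\in\mathbb{R}$, so that $G(\boldsymbol{x})=-(\boldsymbol{x}-\boldsymbol{x}_d)\in\mathbb{R}^{2\times 1}$ and $f(\boldsymbol{x})=\boldsymbol{0}$. Interval excitation reduces to the scalar condition $\int_0^{t_c}\lVert\boldsymbol{x}(s)-\boldsymbol{x}_d\rVert^2\,ds>0$, which holds whenever $\boldsymbol{x}(s)\neq\boldsymbol{x}_d$ on a subset of positive measure of $[0,t_c]$. The only way this can fail is if $\boldsymbol{x}(s)\equiv\boldsymbol{x}_d$ on $[0,T]$. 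But since $K=0$ means the dynamics are $\dot{\boldsymbol{x}}=-k_p(\boldsymbol{x}-\boldsymbol{x}_d)$, the trajectory remains identically at $\boldsymbol{x}_d$ only if it starts there. Therefore, as long as the robot is not already at its goal, IE holds, and simultaneously $\mathcal{N}(G(\boldsymbol{x}(t)))$ is not the pathological constant subspace forbidden by Lemma \ref{lemma1} (since at some $t$, $G(\boldsymbol{x}(t))\neq 0$ forces $\mathcal{N}(G(\boldsymbol{x}(t)))=\{0\}$).

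I do not anticipate a genuine obstacle in this proof; the main subtlety is simply being careful about the edge case in the gain-estimation subcase, namely recognizing that ``robot already at its goal'' is exactly the scenario where $G(\boldsymbol{x}(t))\equiv\boldsymbol{0}$ and hence the only way IE can fail. Once that boundary condition is stated explicitly, the result follows by a direct application of the convergence guarantee of \eqref{parameter_update_law} together with \eqref{GisPE}.
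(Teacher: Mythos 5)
Your proof is correct and follows essentially the same route as the paper: in both subcases you instantiate $G(\boldsymbol{x})$ and $f(\boldsymbol{x})$ exactly as the paper does ($G=k_pI$, $f=-k_p\boldsymbol{x}$ for the goal; $G=-(\boldsymbol{x}-\boldsymbol{x}_d)$, $f=\boldsymbol{0}$ for the gain) and then check the \textbf{IE} condition and the null-space requirement of Lemma \ref{lemma1}. Your observation that the integral only needs the trajectory to avoid the goal on a set of positive measure, plus the dynamical argument that this reduces to not starting at the goal, is a slightly sharper phrasing of the paper's same edge case, but it is not a different approach.
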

\begin{proof}
\begin{enumerate}[label=(\alph*)]
    \item If the observer wants to estimate the goal \textit{i.e.} $\boldsymbol{\theta}=\boldsymbol{x}_d$, then defining $G(\boldsymbol{x})\coloneqq k_pI$ and $f(\boldsymbol{x})\coloneqq -k_p\boldsymbol{x}$ gives $\boldsymbol{u}^*_{\boldsymbol{x}_d}=\hat{\boldsymbol{u}}(\boldsymbol{x})=G(\boldsymbol{x})\boldsymbol{x}_d+f(\boldsymbol{x})$. For this case, goal estimation is always possible because  $G(\boldsymbol{x})^TG(\boldsymbol{x})=k_p^2I\succ 0$ \textit{i.e.} $G(\boldsymbol{x}(t))$ is \textbf{PE} (and \textbf{IE} \eqref{GisPE}) and $G(\boldsymbol{x})$ has no null space (lemma \ref{lemma1}).
    \item If the observer wants to estimate thr gain \textit{i.e.} $\boldsymbol{\theta}=k_p$, defining $G(\boldsymbol{x})\coloneqq -(\boldsymbol{x}-\boldsymbol{x}_d)$ and $f(\boldsymbol{x})\coloneqq \boldsymbol{0}$ gives $\boldsymbol{u}^*_{k_p}=G(\boldsymbol{x})k_p+f(\boldsymbol{x})$. Gain estimation is only possible when $G^T(\boldsymbol{x}(t))G(\boldsymbol{x}(t))=\norm{\boldsymbol{x}(t)-\boldsymbol{x}_d}^2 \neq 0 \forall t\in[0,T]$ \textit{i.e.} when the robot is not at its goal. This is expected because if the robot is already at its goal, then it will stay there forever, so there is no information in its positions about $k_p$, hence the result.  \vspace{-0.45cm}
\end{enumerate}
\end{proof}
\subsection{Exactly one active constraint \textit{i.e.} $K=1$}
\label{caseB}
When one constraint is active, there is one obstacle that the ego robot ``worries" about for collision. Since there are two degrees of freedom in the control, and one obstacle to avoid, the ego robot can avoid this obstacle and additionally minimize $\norm{\boldsymbol{u}-\hat{\boldsymbol{u}}(\boldsymbol{x})}^2$ with the remaining degree of freedom.  This causes $\boldsymbol{u}^*_{\boldsymbol{\theta}}$ to exhibit a well-defined dependence on $\hat{\boldsymbol{u}}(\boldsymbol{x})$  and by extension, on parameters $k_p,\boldsymbol{x}_d$. This  makes their inference using the estimation algorithm in section \ref{AdaptiveObserverAlgorithm} \textit{i.e.} equations \eqref{original}-\eqref{parameter_update_law} feasible.

Let $i \in \{1,2,\cdots,M\}$ denote the index of the active constraint, meaning that it is the obstacle located at $\boldsymbol{x}_i^o$ that should be ``actively" avoided. Thus, from \eqref{activeinactive}, we have $A_{ac}(\boldsymbol{x})\boldsymbol{u}^*_{\boldsymbol{\theta}}=\boldsymbol{a}^T_i(\boldsymbol{x})\boldsymbol{u}^*_{\boldsymbol{\theta}}=b_i(\boldsymbol{x})$  where $A_{ac}(\boldsymbol{x})\coloneqq \boldsymbol{a}^T_i(\boldsymbol{x}) $  and $\boldsymbol{a}^T_i(\boldsymbol{x}),
\boldsymbol{b}_i(\boldsymbol{x})$ are defined in  \eqref{Ab_static}. Since $\boldsymbol{a}^T_i(\boldsymbol{x}) \in \mathbb{R}^{1\times 2}$, from rank-nullity theorem it follows that $\boldsymbol{a}^T_i(\boldsymbol{x})$ has a non-trivial null space of dimension one. The null space gives a degree of freedom to the control to minimize $\norm{\boldsymbol{u} - \hat{\boldsymbol{u}}(\boldsymbol{x})}^2$ while satisfying the constraint. We illustrate this by computing the SVD $\boldsymbol{a}^T_i(\boldsymbol{x}) =U(\boldsymbol{x})\Sigma(\boldsymbol{x}) V^T(\boldsymbol{x})$. Defining
\begin{align}
U(\boldsymbol{x}) &\coloneqq 1 \nonumber \\
\Sigma(\boldsymbol{x}) &\coloneqq \left[\begin{matrix} \Sigma_m(\boldsymbol{x}),0 \end{matrix}\right] \mbox{ where }\Sigma_m(\boldsymbol{x})=\norm{\boldsymbol{a}_i(\boldsymbol{x})} \nonumber \\
V(\boldsymbol{x}) &\coloneqq  \left[\begin{matrix} V_1,V_2 \end{matrix}\right] \mbox{ } V_1=\frac{\boldsymbol{a}_i(\boldsymbol{x})}{\norm{\boldsymbol{a}_i(\boldsymbol{x})}},V_2=R_{\frac{\pi}{2}}\frac{\boldsymbol{a}_i(\boldsymbol{x})}{\norm{\boldsymbol{a}_i(\boldsymbol{x})}}.   
\end{align} 
Since $V$ forms a basis for $\mathbb{R}^2$, any $\boldsymbol{u}$ can be expressed as
\begin{align}
\boldsymbol{u}&=\left[\begin{matrix} V_1,V_2 \end{matrix}\right] \left[\begin{matrix} \tilde{u}_1 \\ \tilde{u}_2\end{matrix}\right]  \nonumber.  \\
\implies \boldsymbol{a}^T_i\boldsymbol{u}-b_i &=U\left[\begin{matrix} \Sigma_m,0 \end{matrix}\right] \left[\begin{matrix} V_1^T \\ V_2^T \end{matrix}\right] \left[\begin{matrix} V_1,V_2 \end{matrix}\right] \left[\begin{matrix} \tilde{u}_1 \\ \tilde{u}_2\end{matrix}\right]-b_i  \nonumber \\
&=U\Sigma_m\tilde{u}_1+ 0\cdot\tilde{u}_2-b_i=0
\end{align}
Choosing $\tilde{u}_1 = \Sigma_m^{-1}U^Tb_i$ and $\tilde{u}_2 = \psi \in \mathbb{R}$, we find that  
\begin{align}
\label{control_case2_general}
\boldsymbol{u}= V_1\Sigma_m^{-1}U^Tb_i + V_2\psi
\end{align}
satisfies $\boldsymbol{a}^T_i(\boldsymbol{x})\boldsymbol{u}=b_i(\boldsymbol{x})$ $ \forall \psi \in \mathbb{R}$. Recall from the properties of SVD that $V_2$ forms a basis for $\mathcal{N}(\boldsymbol{a}^T_i(\boldsymbol{x}))$. We tune $\psi$ to minimize $\norm{\boldsymbol{u} - \hat{\boldsymbol{u}}}^2$ by solving the following unconstrained minimization problem 
\begin{align}
\label{psi_determine}
\begin{aligned}
\psi^*&= \underset{\psi}{\arg\min}
& &\norm{\boldsymbol{u} - \hat{\boldsymbol{u}}}^2 \\
&= \underset{\psi}{\arg\min}  
& &\norm{V_1\Sigma_m^{-1}U^Tb_i + V_2\psi -\hat{\boldsymbol{u}} }^2,
\end{aligned}
\end{align}
which gives $\psi^*=V_2^T \hat{\boldsymbol{u}}$. Substituting this in \eqref{control_case2_general}, gives
\begin{align}
\label{control_case2_optimal}
\boldsymbol{u}^*_{\boldsymbol{\theta}} = V_1\Sigma_m^{-1}U^Tb_i + V_2V_2^T\hat{\boldsymbol{u}}.
\end{align} 
This equation is the solution to \eqref{optimization_formulation_2_static_obstacles} and by extension, to \eqref{optimization_formulation_1_static_obstacles}. Note that it is the second term $V_2V_2^T\hat{\boldsymbol{u}}$ that depends on  parameters  $k_p,\boldsymbol{x}_d$ because $\hat{\boldsymbol{u}} = -k_p(\boldsymbol{x}-\boldsymbol{x}_d)$. Using this relation \eqref{control_case2_optimal}, we are ready to state the conditions under which inference of parameters using the algorithm ( \eqref{original}-\eqref{parameter_update_law}) and lemma \ref{lemma1}  in section \ref{AdaptiveObserverAlgorithm} is possible.
\begin{theorem}
	\label{theorem_caseB}
	If $\forall t \in [0,T]$, exactly one constraint is active, then the observer can estimate the goal (gain) using $\boldsymbol{x}(t),\boldsymbol{u}^*(\boldsymbol{x}(t)) \mbox{ } \forall t \in [0,T]$, assuming the gain (goal) is known, as long as the orientation of $\frac{\boldsymbol{a}_i(\boldsymbol{x})}{\norm{\boldsymbol{a}_i(\boldsymbol{x})}}$ is not time-invariant and $\boldsymbol{x}(t)\neq \boldsymbol{x}_d\mbox{ } \forall t \in [0,T]$ 
\end{theorem}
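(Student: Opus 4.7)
The plan is to substitute the closed-form control law \eqref{control_case2_optimal} into the parameter-affine template $\dot{\boldsymbol{x}}=G(\boldsymbol{x})\boldsymbol{\theta}+f(\boldsymbol{x})$ separately for $\boldsymbol{\theta}=\boldsymbol{x}_d$ and for $\boldsymbol{\theta}=k_p$, and then check the two hypotheses needed for identification: that $G(\boldsymbol{x}(t))$ is \textbf{IE} (via \eqref{GisPE}) and that $\mathcal{N}(G(\boldsymbol{x}(t)))$ is not time-invariant (via lemma \ref{lemma1}). The calculation hinges on the fact that only the term $V_2 V_2^T \hat{\boldsymbol{u}}$ in \eqref{control_case2_optimal} carries dependence on the parameters, while $V_1\Sigma_m^{-1}U^T b_i$ gets absorbed into $f(\boldsymbol{x})$.

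For part (a), I would expand $V_2V_2^T \hat{\boldsymbol{u}}=-k_p V_2V_2^T(\boldsymbol{x}-\boldsymbol{x}_d)$ and read off $G(\boldsymbol{x})=k_p V_2(\boldsymbol{x})V_2^T(\boldsymbol{x})$ and $f(\boldsymbol{x})=V_1\Sigma_m^{-1}U^T b_i - k_p V_2V_2^T\boldsymbol{x}$. Since $V_2V_2^T$ is the rank-one orthogonal projector onto $V_2$, its null space is $\mathrm{span}(V_1)=\mathrm{span}(\boldsymbol{a}_i/\norm{\boldsymbol{a}_i})$. Time-invariance of this null space is therefore equivalent to time-invariance of the orientation of $\boldsymbol{a}_i/\norm{\boldsymbol{a}_i}$, so the stated hypothesis is exactly the condition of lemma \ref{lemma1}. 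The \textbf{IE} requirement reduces to showing $\int_0^{t_c} V_2(s)V_2^T(s)\,ds\succ 0$, and the same rotation of $V_2$ guarantees this because two linearly independent directions of $V_2$ on the interval force the integrated rank-one projectors to span $\mathbb{R}^2$.

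For part (b), I would rewrite the same expression as $G(\boldsymbol{x})k_p+f(\boldsymbol{x})$ with $G(\boldsymbol{x})=-V_2V_2^T(\boldsymbol{x}-\boldsymbol{x}_d)\in\mathbb{R}^{2\times 1}$ and $f(\boldsymbol{x})=V_1\Sigma_m^{-1}U^T b_i$. The null-space test of lemma \ref{lemma1} is vacuous because $G$ has a single column, so I only need $G^T(\boldsymbol{x}(t))$ to be \textbf{IE}. This amounts to showing that $\norm{V_2V_2^T(\boldsymbol{x}-\boldsymbol{x}_d)}$ is not identically zero on $[0,T]$; the condition $\boldsymbol{x}(t)\neq\boldsymbol{x}_d$ prevents the trivial zero, and the rotating $V_2$ prevents $(\boldsymbol{x}-\boldsymbol{x}_d)$ from being permanently annihilated by the projector $V_2V_2^T$ (since a fixed direction of $\boldsymbol{x}-\boldsymbol{x}_d$ can lie in the one-dimensional null space $\mathrm{span}(V_1)$ only if $V_1$ itself is time-invariant).

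The main obstacle I anticipate is the \textbf{IE} argument in part (a): proving that non-constant orientation of $V_2$ actually forces $\int V_2 V_2^T\,ds\succ 0$ (and not merely $\succeq 0$) rigorously, since a set of measure zero on which $V_2$ varies would not suffice. I would handle this by invoking continuity of $V_2(\boldsymbol{x}(t))$ in $t$ so that any change in orientation persists on an open subinterval, then pick two times $t_1,t_2$ at which $V_2(t_1)$ and $V_2(t_2)$ are linearly independent and lower-bound the integrated projector on a neighborhood of these points. The dependence on $\boldsymbol{x}(t)$ through $\boldsymbol{a}_i(\boldsymbol{x})=-(\boldsymbol{x}-\boldsymbol{x}^o_i)$ requires that the trajectory $\boldsymbol{x}(t)-\boldsymbol{x}^o_i(t)$ change direction, which is the geometric content of the hypothesis.
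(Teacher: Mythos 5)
Your proposal matches the paper's proof essentially verbatim: the same choices of $G(\boldsymbol{x})$ and $f(\boldsymbol{x})$ in both cases, the same identification of $\mathcal{N}(k_p V_2V_2^T)$ with $\mathrm{span}(V_1)=\mathrm{span}(\boldsymbol{a}_i/\norm{\boldsymbol{a}_i})$, and the same appeal to Lemma \ref{lemma1} and the \textbf{IE} condition. Your added continuity argument for why a rotating $V_2$ actually forces $\int V_2V_2^T\,ds\succ 0$ goes slightly beyond the paper, which states only the necessity direction and leaves that sufficiency step implicit.
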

\begin{proof}
	\begin{enumerate}[label=(\alph*)]
		\item If the observer wants to estimate the goal \textit{i.e.} $\boldsymbol{\theta}=\boldsymbol{x}_d$, then define $G(\boldsymbol{x})\coloneqq k_pV_2V_2^T$ and $f(\boldsymbol{x})\coloneqq V_1\Sigma_m^{-1}U^Tb_i - k_pV_2V_2^T\boldsymbol{x}$ using \eqref{control_case2_optimal} so that $\boldsymbol{u}^*_{\boldsymbol{x}_d}=G(\boldsymbol{x})\boldsymbol{x}_d+f(\boldsymbol{x})$. The \textbf{IE} condition (Def. \ref{IE}, \eqref{GisPE})  requires that $\int_{0}^T G^T(\boldsymbol{x}(t))G(\boldsymbol{x}(t))dt \succ 0$. The situation when positive-definiteness is not attained is when $\mathcal{N}(G(\boldsymbol{x}(t))$ is time-invariant which follows from Lemma \ref{lemma1}. Note that $\mathcal{N}(G(\boldsymbol{x}))=\mathcal{N}(k_pV_2V_2^T)=V_1=\frac{\boldsymbol{a}_i(\boldsymbol{x})}{\norm{\boldsymbol{a}_i(\boldsymbol{x})}}$ which follows from the properties of SVD. Since $V_1$ is always a unit vector, it can only change through its orientation. If its orientation does not change over $[0,T]$, then $V_1$ is a time-invariant vector in $\mathcal{N}(G(\boldsymbol{x}))$ and hence goal estimation will not be possible, using Lemma \ref{lemma1}. 	

		\item If $\boldsymbol{\theta}=k_p$, then $G(\boldsymbol{x})\coloneqq -V_2V_2^T(\boldsymbol{x}-\boldsymbol{x}_d)$ and $f(\boldsymbol{x})\coloneqq V_1\Sigma_m^{-1}U^Tb_i $ so that $\boldsymbol{u}^*_{k_p}=G(\boldsymbol{x})k_p+f(\boldsymbol{x})$. If $(\boldsymbol{x}-\boldsymbol{x}_d) \parallel V_1$ $\implies$ $(\boldsymbol{x}-\boldsymbol{x}_d) \perp V_2$ then $G(\boldsymbol{x}) \equiv \boldsymbol{0}$. So if $(\boldsymbol{x}(t)-\boldsymbol{x}_d) \parallel V_1(\boldsymbol{x}(t)) \mbox{  } \forall t \in [0,T] $ then the \textbf{IE} condition (Def. \ref{IE})  for gain identification will not be satisfied.  \vspace{-0.45cm}
	\end{enumerate} 
\end{proof}
The video at  \url{https://youtu.be/WoUSej79ZGM} shows an example where invariance of the orientation of null-space results in failure to identify goal using the estimation algorithm in \ref{AdaptiveObserverAlgorithm} \eqref{original}-\eqref{parameter_update_law} and also a UKF.
\subsection{$2\leq K\leq M$ and $\texttt{rank}(A_{ac}(\boldsymbol{x}))=1$}
\label{caseE}
Now we consider the more general case in which there is more than one constraint active, but all of these are linearly dependent on one constraint among them. This means that effectively there is only one ``representative constraint" or obstacle for the ego robot to worry about. Consequently, this case is similar to the case with just one active obtacle. We formally demontrate this now. Let $i_1,i_2,\cdots,i_K\in\{1,\cdots,M\}$ be the indices of active constraints which satisfy
\begin{align}
\label{abK}
\left[\begin{matrix}
\boldsymbol{a}^T_{i_1}(\boldsymbol{x}) 
\\ \boldsymbol{a}^T_{i_2}(\boldsymbol{x}) \\
\vdots \\
\boldsymbol{a}^T_{i_K}(\boldsymbol{x}) 
\end{matrix}\right]\boldsymbol{u}^*_{\boldsymbol{\theta}}&= 
\left[\begin{matrix}b_{i_1}(\boldsymbol{x}) \\b_{i_2}(\boldsymbol{x}) \\ \vdots \\ b_{i_K}(\boldsymbol{x}) \end{matrix}\right]  \mbox{ or } \nonumber \\
A_{ac}(\boldsymbol{x})\boldsymbol{u}^*_{\boldsymbol{\theta}}&=\boldsymbol{b}_{ac}(\boldsymbol{x}),
\end{align}
where $\boldsymbol{a}^T_{i_j}(\boldsymbol{x})  \mbox{ and } b_{i_j}(\boldsymbol{x})$ are defined using \eqref{Ab_static}.  Since $\texttt{rank}(A_{ac}(\boldsymbol{x}))=1$, WLOG we have  $\boldsymbol{a}^T_{i_j}(\boldsymbol{x})=\lambda_j\boldsymbol{a}^T_{i_1}(\boldsymbol{x})$ where $ \lambda_j \in \mathbb{R}\mbox{ }\forall j \in \{2,3,\cdots,K\}$.  Let's first see the geometric arrangements of the robot and the obstacles $i_1,i_2\cdots,i_K$ when this case arises in practice.
\begin{lemma}
	\label{lemma4}
	The case with more than one constraint active and all linearly dependent can only arise in practice for $\lambda_j\in \{+1,-1\}$. $\lambda_j=+1$ means that obstacles indexed $i_1$ and $i_j$ are coinciding and $b_{i_1}(\boldsymbol{x})=b_{i_j}(\boldsymbol{x})$.  $\lambda_j=-1$ means that the robot is located exactly in the middle of obstacles $i_1$ and $i_j$. Furthermore, even if there is just one $j$ for which $\lambda_j=-1$, then $b_{i_j}(\boldsymbol{x})=0 \mbox{ } \forall j \in \{1,2,\cdots,K\} \iff \boldsymbol{b}_{ac}(\boldsymbol{x})=\boldsymbol{0}$. 
\end{lemma}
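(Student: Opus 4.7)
My plan is to exploit the linear dependence of the active rows, combine it with the consistency of the equality-constrained system \eqref{optimization_formulation_2_static_obstacles}, and invoke the CBF safety condition to pin down the admissible values of $\lambda_j$. Using $\boldsymbol{a}^T_j(\boldsymbol{x})=-(\boldsymbol{x}-\boldsymbol{x}^o_j)^T$ from \eqref{Ab_static}, the hypothesis $\boldsymbol{a}^T_{i_j}=\lambda_j\boldsymbol{a}^T_{i_1}$ translates immediately to $\boldsymbol{x}-\boldsymbol{x}^o_{i_j}=\lambda_j(\boldsymbol{x}-\boldsymbol{x}^o_{i_1})$, placing the robot and the two obstacles on a common line, with $\norm{\boldsymbol{x}-\boldsymbol{x}^o_{i_j}}^2=\lambda_j^2 r$ where $r\coloneqq\norm{\boldsymbol{x}-\boldsymbol{x}^o_{i_1}}^2$.

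Next, since both active equalities $\boldsymbol{a}^T_{i_1}\boldsymbol{u}^*=b_{i_1}$ and $\boldsymbol{a}^T_{i_j}\boldsymbol{u}^*=b_{i_j}$ must hold simultaneously, consistency of the EQP forces $b_{i_j}=\lambda_j b_{i_1}$. Substituting the CBF form $b_j=\tfrac{\gamma}{2}(\norm{\Delta\boldsymbol{x}_j}^2-D_s^2)$ together with $\norm{\Delta\boldsymbol{x}_{i_j}}^2=\lambda_j^2 r$ yields, after a short calculation, the factored identity $(1-\lambda_j)(\lambda_j r + D_s^2)=0$, so either $\lambda_j=1$ or $\lambda_j r=-D_s^2$ (which forces $\lambda_j<0$). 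I would then rule out the second branch except for $\lambda_j=-1$ via safety: collision avoidance requires $r\geq D_s^2$ and $\lambda_j^2 r\geq D_s^2$, and in that branch $\lambda_j=-D_s^2/r$ gives $|\lambda_j|\leq 1$ from the first and $D_s^4/r\geq D_s^2$ (i.e., $r\leq D_s^2$) from the second; the only feasible intersection is $r=D_s^2$, $\lambda_j=-1$.

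With $\lambda_j$ restricted to $\{+1,-1\}$, the geometric interpretations follow directly: $\lambda_j=+1$ gives $\boldsymbol{x}^o_{i_j}=\boldsymbol{x}^o_{i_1}$ (coinciding obstacles) and therefore $b_{i_j}=b_{i_1}$, while $\lambda_j=-1$ gives $\boldsymbol{x}=\tfrac{1}{2}(\boldsymbol{x}^o_{i_1}+\boldsymbol{x}^o_{i_j})$, so the robot sits exactly between the two obstacles. For the last claim, if some $\lambda_j=-1$ then the consistency relation gives $b_{i_j}=-b_{i_1}$; since safety guarantees $b_k\geq 0$ for every $k$, both $b_{i_1}$ and $b_{i_j}$ must vanish, and for any other active index $k$, $b_{i_k}=\lambda_k b_{i_1}=0$, so $\boldsymbol{b}_{ac}(\boldsymbol{x})=\boldsymbol{0}$.

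\textbf{Anticipated obstacle.} The delicate step will be pruning the second branch of the factored identity down to the single value $\lambda_j=-1$. This is precisely where the phrase ``in practice'' in the statement enters: it is the CBF-enforced safety at both obstacles that pinches the inequalities $r\geq D_s^2$ and $\lambda_j^2 r\geq D_s^2$ into an equality, forcing $\lambda_j=-1$ rather than allowing an entire continuum $\lambda_j\in(-1,0)$.
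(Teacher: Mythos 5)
Your proposal is correct and follows essentially the same route as the paper: derive $b_{i_j}=\lambda_j b_{i_1}$ from the two active equalities, substitute the CBF form of $b$ to obtain a quadratic in $\lambda_j$ with roots $1$ and $-D_s^2/(\cdot)$, and use safety at both obstacles to pinch the negative root to $-1$ (the paper phrases this pinching via the signs of $b_{i_1},b_{i_j}$ rather than via the two distance inequalities, but the argument is the same). The final step forcing $\boldsymbol{b}_{ac}=\boldsymbol{0}$ also matches the paper's.
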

\begin{proof}
	Since $i_1,i_j$ are active constraints $\forall j \in \{2,3,\cdots,K\}$
	\begin{align}
	\label{ijactive_K}
	\boldsymbol{a}^T_{i_1}(\boldsymbol{x})\boldsymbol{u}^*_{\boldsymbol{\theta}}&=b_{i_1}(\boldsymbol{x}) \\
	\boldsymbol{a}^T_{i_j}(\boldsymbol{x})\boldsymbol{u}^*_{\boldsymbol{\theta}}&=b_{i_j}(\boldsymbol{x}) \mbox{  } \forall j \in \{2,3,\cdots,K\}
	\end{align}
	Substituting  $\boldsymbol{a}^T_{i_1}=\lambda \boldsymbol{a}^T_{i_j}$ in \eqref{ijactive_K}, we get
	\begin{align}
	\label{beqn_K}
	\lambda \boldsymbol{a}^T_{i_j}(\boldsymbol{x})\boldsymbol{u}^*_{\boldsymbol{\theta}}&=b_{i_1}(\boldsymbol{x}) \nonumber \\
	\implies \lambda b_{i_j}(\boldsymbol{x}) &= b_{i_1}(\boldsymbol{x}) \mbox{ }\forall j \in \{2,3,\cdots,K\}
	\end{align}
	Recalling that $\boldsymbol{b}_{r}(\boldsymbol{x}) \coloneqq \frac{\gamma}{2} (\norm{\boldsymbol{a}_{r}}^2-D_s^2) $ from \eqref{Ab_static}, we get 
	\begin{align}
	&\lambda \frac{\gamma}{2} (\norm{\boldsymbol{a}_{i_j}}^2-D_s^2)= \frac{\gamma}{2} (\norm{\boldsymbol{a}_{i_1}}^2-D_s^2)  \nonumber \\
	\implies &\lambda (\norm{\boldsymbol{a}_{i_j}}^2-D_s^2) = (\norm{\lambda \boldsymbol{a}_{i_j}}^2-D_s^2)  \nonumber \\
	\implies &\lambda^2\norm{\boldsymbol{a}_{i_j}}^2 - \lambda (\norm{\boldsymbol{a}_{i_j}}^2-D_s^2) -D_s^2 =0
	\end{align}
	This equation has two roots $\lambda=1,-\frac{D_s^2}{\norm{\boldsymbol{a}_{i_j}}^2}$. 
	\begin{enumerate}
		\item $\lambda=1 \implies b_{i_1}(\boldsymbol{x})=b_{i_j}(\boldsymbol{x})$ and $\boldsymbol{a}^T_{i_1}=\boldsymbol{a}^T_{i_j}$ \textit{i.e.} $\boldsymbol{x}-\boldsymbol{x}^o_{i_1}=\boldsymbol{x}-\boldsymbol{x}^o_{i_j}$ or $\boldsymbol{x}^o_{i_1}=\boldsymbol{x}^o_{i_j}$. This means obstacle $i_j $ is coinciding with obstacle $i_1$.  This is a trivial yet an expected result.
		\item $\lambda=-\frac{D_s^2}{\norm{\boldsymbol{a}_{i_j}}^2}<0$ implies that $\boldsymbol{a}^T_{i_1},\boldsymbol{a}^T_{i_j}$ are anti-parallel. However, when $\lambda<0$, $b_{i_j}(\boldsymbol{x})>0 \implies b_{i_1}(\boldsymbol{x})<0$. Recalling the definition of $b_{i_j}(\boldsymbol{x})$, we know that $b_{i_j}(\boldsymbol{x})>0 \iff \norm{\boldsymbol{a}_{i_j}(\boldsymbol{x})}^2>D_s^2  $ and therefore $b_{i_1}(\boldsymbol{x})<0 \iff \norm{\boldsymbol{a}_{i_1}(\boldsymbol{x})}^2<D_s^2$.  This means that if the robot is \textit{strictly safe} with respect to obstacle $i_j$, then it is colliding with obstacle $i_1$. This means that the control at the previous time step $\boldsymbol{u}^*_{\boldsymbol{\theta}}(\boldsymbol{x}(t^-))$ caused this collision which is not possible. This conflict can only be resolved when we relax \textit{strict safety} to $b_{i_j}(\boldsymbol{x})=0 \implies \norm{\boldsymbol{a}_{i_j}(\boldsymbol{x})}^2=D_s^2 $ meaning that the robot and obstacle $j$ are touching each other. This gives $b_{i_1}(\boldsymbol{x})=0$ implying that the robot and obstacle $i_1$ are also touching each other. In this case $\lambda=-\frac{D_s^2}{\norm{\boldsymbol{a}_{i_j}}^2}=-1$ which implies  $\boldsymbol{x}-\boldsymbol{x}^o_{i_1}=-(\boldsymbol{x}-\boldsymbol{x}^o_{i_j})$ or $\boldsymbol{x}=\frac{1}{2}(\boldsymbol{x}^o_{i_1}+\boldsymbol{x}^o_{i_j})$. Furthermore, even if there is one $j$ for which $\lambda_j=0$, then from \eqref{beqn_K}, $b_{i_j}=0 \forall j \{1,2,\cdots,,K\} \implies \boldsymbol{b}_{ac}=\boldsymbol{0}$  \vspace{-0.45cm}
	\end{enumerate} 
\end{proof}
Next, we derive an analytical expression for $\boldsymbol{u}^*_{\boldsymbol{\theta}}(\boldsymbol{x})$ using \eqref{abK}. Note $A_{ac}(\boldsymbol{x})=U(\boldsymbol{x})\Sigma(\boldsymbol{x})V^T(\boldsymbol{x})$ where
\begin{align}
\label{svdCaseB}
U&\coloneqq  \left[\begin{matrix} U_1,U_2 \end{matrix}\right] \mbox{ , }  U_1=\frac{1}{\sqrt{1+\Sigma_{j=2}^K\lambda_j^2}} \left[\begin{matrix} 1,  \lambda_2, \cdots  \lambda_K \end{matrix}\right]^T  \nonumber \\
\Sigma &\coloneqq 
\left[\begin{array}{@{}c|c@{}}
\Sigma_r& 0^{1 \times 1} \\
\hline
0^{K-1 \times 1}  &0^{K-1 \times 1} 
\end{array}\right] 
\mbox{ , }\Sigma_r=\sqrt{1+ \Sigma_{j=2}^K\lambda_j^2 }\norm{\boldsymbol{a}_i(\boldsymbol{x})} \nonumber \\
V&\coloneqq  \left[\begin{matrix} V_1,V_2 \end{matrix}\right] \mbox{,} V_1=\frac{\boldsymbol{a}_{i_1}(\boldsymbol{x})}{\norm{\boldsymbol{a}_{i_1}(\boldsymbol{x})}},V_2=R_{\frac{\pi}{2}}\frac{\boldsymbol{a}_{i_1}(\boldsymbol{x})}{\norm{\boldsymbol{a}_{i_1}(\boldsymbol{x})}}. 
\end{align}
Choosing $\boldsymbol{u}=V_1\tilde{u}_1+V_2\tilde{u}_2$, from \eqref{abK} we get
\begin{align}
A_{ac}\boldsymbol{u}-\boldsymbol{b}_{ac} &= \left[\begin{matrix} U_1,U_2 \end{matrix}\right]   \left[\begin{array}{@{}c|c@{}}
\Sigma_r & 0 \\
\hline
0  &0
\end{array}\right]  \left[\begin{matrix} V^T_1 \\ V^T_2 \end{matrix}\right] \left[\begin{matrix} V_1,V_2 \end{matrix}\right]  \left[\begin{matrix} \tilde{u}_1 \\ \tilde{u}_2 \end{matrix}\right] -\boldsymbol{b}_{ac} \nonumber \\
&= \left[\begin{matrix} U_1,U_2 \end{matrix}\right] \bigg(\left[\begin{matrix} \Sigma_r\tilde{u}_1 \\0 \end{matrix}\right]  -\left[\begin{matrix} U^T_1\boldsymbol{b}_{ac}\\U^T_2\boldsymbol{b}_{ac} \end{matrix}\right]\bigg).
\end{align}
Since $\norm{}^2$ is unitary invariant, from \eqref{abK}
\begin{align}
\norm{A_{ac}\boldsymbol{u}-\boldsymbol{b}_{ac} }^2 & = \norm{\left[\begin{matrix} \Sigma_r\tilde{u}_1 \\0 \end{matrix}\right]  -\left[\begin{matrix} U^T_1\boldsymbol{b}_{ac}\\U^T_2\boldsymbol{b}_{ac} \end{matrix}\right]}^2 \nonumber \\
&= \norm{\Sigma_r\tilde{u}_1 -U^T_1\boldsymbol{b}_{ac} }^2 + \norm{U^T_2\boldsymbol{b}_{ac}}^2 .
\end{align}
The minimum norm is achieved for $\tilde{u}_1=\Sigma_r^{-1}U_1^T\boldsymbol{b}_{ac}$. Choosing $\tilde{u}_2 =\psi \in \mathbb{R}$, the ``least-squares" solutions are
\begin{align}
\boldsymbol{u}&=V_1\Sigma_r^{-1}U_1^T\boldsymbol{b}_{ac} + V_2\psi.
\end{align}
Computing $\psi$ by minimizing $\norm{\boldsymbol{u} - \hat{\boldsymbol{u}}}^2$, we get $\psi^*=V_2^T\hat{\boldsymbol{u}}$ which gives 
\begin{align}
\label{control_case_3_optimal}
\boldsymbol{u}^*_{\boldsymbol{\theta}}&=V_1\Sigma_r^{-1}U_1^T\boldsymbol{b}_{ac} + V_2V_2^T\hat{\boldsymbol{u}}.
\end{align}
Suppose  $\lambda_j=+1 \mbox{ } \forall j \in \{2,3,\cdots,K\}$, then from \eqref{svdCaseB}, we have $U_1= \frac{1}{\sqrt{K}}\mathbf{1}$, $\Sigma_r= \sqrt{K} \norm{\boldsymbol{a}_{i_1}(\boldsymbol{x})}$. Moreover, from lemma \ref{lemma4}, we have $b_{i_j}(\boldsymbol{x})=b_{i_1}(\boldsymbol{x}) \mbox{ } \forall j  \in \{2,3,\cdots,K\}$, this means that, and $ \boldsymbol{b}_{ac}(\boldsymbol{x})=\mathbf{1}b_{i_1}(\boldsymbol{x})$. Substituting this in \eqref{control_case_3_optimal}, one can verify that we get the same expression for control as in \eqref{control_case2_optimal}. This is  expected because $\lambda_j=1 \mbox{ } \forall j \in \{2,3,\cdots,K\}$ means that all obstacles are coinciding so the ego robot treats them all as just one obstacle, hence the control is identical to one when there was just one active obstacle in \ref{caseB}.  The slight difference between this case and \ref{caseB} comes when there is a $j$ for which  $\lambda_j=-1$. From lemma \ref{lemma4}, this happens when the robot is in the middle of obstacles $i_1$ and $i_j$ and $\boldsymbol{b}_{ac}=\boldsymbol{0}$. Then it follows from \eqref{control_case_3_optimal} that
\begin{align}
\boldsymbol{u}^*_{\boldsymbol{\theta}}&= V_2V_2^T\hat{\boldsymbol{u}}
\end{align}
$V_2V_2^T\hat{\boldsymbol{u}}$ is the projection of  $\hat{\boldsymbol{u}}$ along $V_2=R_{\frac{\pi}{2}}\frac{\boldsymbol{a}_{i_1}(\boldsymbol{x})}{\norm{\boldsymbol{a}_{i_1}(\boldsymbol{x})}}$. This is expected because when the robot is in the middle of the obstacles (lemma \ref{lemma4}), the only feasible direction of motion is along the line that is perpendicular to the line segment connecting the obstacles \textit{i.e.} along $R_{\frac{\pi}{2}}\frac{\boldsymbol{a}_{i_1}(\boldsymbol{x})}{\norm{\boldsymbol{a}_{i_1}(\boldsymbol{x})}}$ because motion along any other direction will cause collisions. For any $\lambda$, $\boldsymbol{u}^*_{\boldsymbol{\theta}}$ in \eqref{control_case_3_optimal} depends on $k_p,\boldsymbol{x}_d$ because of $\hat{\boldsymbol{u}}$.  We next state the conditions under which their inference  s possible.
\begin{theorem}
	\label{theorem_caseC}
	If $\forall t \in [0,T]$, two or more than two constraints are active, all of which are linearly dependent on one among them, then the observer can estimate the goal (gain) using $\boldsymbol{x}(t),\boldsymbol{u}^*(\boldsymbol{x}(t)) \mbox{ } \forall t \in [0,T]$, assuming the gain (goal) is known, as long as the orientation of $\frac{\boldsymbol{a}_{i_1}(\boldsymbol{x})}{\norm{\boldsymbol{a}_{i_1}(\boldsymbol{x})}}$ is not time-invariant and $\boldsymbol{x}(t)\neq \boldsymbol{x}_d\mbox{ } \forall t \in [0,T]$ 
\end{theorem}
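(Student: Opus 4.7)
The plan is to follow exactly the template used in the proof of Theorem \ref{theorem_caseB}, because the explicit control law derived in \eqref{control_case_3_optimal}, namely $\boldsymbol{u}^*_{\boldsymbol{\theta}} = V_1\Sigma_r^{-1}U_1^T\boldsymbol{b}_{ac} + V_2V_2^T\hat{\boldsymbol{u}}$, has the same functional dependence on $(k_p,\boldsymbol{x}_d)$ as the single-active-constraint control \eqref{control_case2_optimal}, with $\boldsymbol{a}_i$ simply replaced by $\boldsymbol{a}_{i_1}$ in the definition of $V_1,V_2$. Consequently I can just recast $\boldsymbol{u}^*_{\boldsymbol{\theta}}$ into the parameter-affine form $G(\boldsymbol{x})\boldsymbol{\theta}+f(\boldsymbol{x})$ required by \eqref{original} and invoke Lemma \ref{lemma1}.

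For part (a), with $\boldsymbol{\theta}=\boldsymbol{x}_d$, I would substitute $\hat{\boldsymbol{u}}=-k_p(\boldsymbol{x}-\boldsymbol{x}_d)$ into \eqref{control_case_3_optimal} and set $G(\boldsymbol{x})\coloneqq k_p V_2V_2^T$ together with $f(\boldsymbol{x})\coloneqq V_1\Sigma_r^{-1}U_1^T\boldsymbol{b}_{ac}-k_pV_2V_2^T\boldsymbol{x}$. Using the SVD \eqref{svdCaseB}, $V_2V_2^T$ is the rank-one projector onto the orthogonal complement of $V_1=\boldsymbol{a}_{i_1}/\norm{\boldsymbol{a}_{i_1}}$, so $\mathcal{N}(G(\boldsymbol{x}))=\mathrm{span}(V_1)$. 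By Lemma \ref{lemma1}, the IE condition can hold only if this one-dimensional null space is not time-invariant, i.e.\ if the orientation of $\boldsymbol{a}_{i_1}(\boldsymbol{x})/\norm{\boldsymbol{a}_{i_1}(\boldsymbol{x})}$ varies on $[0,T]$, which is exactly the hypothesis of the theorem.

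For part (b), with $\boldsymbol{\theta}=k_p$, I would instead set $G(\boldsymbol{x})\coloneqq -V_2V_2^T(\boldsymbol{x}-\boldsymbol{x}_d)$ and $f(\boldsymbol{x})\coloneqq V_1\Sigma_r^{-1}U_1^T\boldsymbol{b}_{ac}$. Here $G(\boldsymbol{x})$ vanishes exactly when $(\boldsymbol{x}-\boldsymbol{x}_d)\parallel V_1$ or $\boldsymbol{x}=\boldsymbol{x}_d$; outside of these degenerate geometries, $\int_0^{T}G^TG\,dt$ is strictly positive and the IE condition of Remark \ref{R2} is met. Thus the hypothesis $\boldsymbol{x}(t)\neq\boldsymbol{x}_d$ combined with the rotating-$V_1$ assumption again yields gain identifiability via \eqref{parameter_update_law}.

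The only real subtlety, and the place where I would be most careful, is confirming that the seemingly more complicated $K$-constraint SVD in \eqref{svdCaseB} still produces the \emph{same} $V_1,V_2$ (up to sign) as in case \ref{caseB}: this relies on the rank-one assumption $\boldsymbol{a}^T_{i_j}=\lambda_j\boldsymbol{a}^T_{i_1}$ which forces the row space of $A_{ac}$ to coincide with $\mathrm{span}(\boldsymbol{a}_{i_1})$, so $V$ is identical to the case-\ref{caseB} right-singular basis. Once this is observed, the null-space structure of $G(\boldsymbol{x})$ is literally identical to that in Theorem \ref{theorem_caseB}, so the conditions on $V_1$ and on $\boldsymbol{x}(t)\neq\boldsymbol{x}_d$ transfer verbatim, and no new excitation analysis is required.
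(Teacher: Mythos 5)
Your proposal matches the paper's proof essentially verbatim: the same choices of $G(\boldsymbol{x})$ and $f(\boldsymbol{x})$ for both the goal and gain cases, the same identification of $\mathcal{N}(G(\boldsymbol{x}))$ with $\mathrm{span}(V_1)$, and the same appeal to Lemma \ref{lemma1} and the \textbf{IE} condition. Your extra check that the rank-one structure of $A_{ac}$ forces the right-singular basis to coincide with that of the single-constraint case is a sound observation that the paper leaves implicit in \eqref{svdCaseB}.
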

\begin{proof}
	\begin{enumerate}[label=(\alph*)]
		\item If $\boldsymbol{\theta}=\boldsymbol{x}_d$, then define $G(\boldsymbol{x})\coloneqq k_pV_2V_2^T$ and $f(\boldsymbol{x})\coloneqq V_1\Sigma_r^{-1}U_1^T\boldsymbol{b}_{ac} - k_pV_2V_2^T\boldsymbol{x}$ using \eqref{control_case_3_optimal} so that $\boldsymbol{u}^*_{\boldsymbol{x}_d}=G(\boldsymbol{x})\boldsymbol{x}_d+f(\boldsymbol{x})$. The \textbf{IE} condition (Def. \ref{IE} and \eqref{GisPE}) requires that the orientation of $\mathcal{N}(G(\boldsymbol{x}))=\frac{\boldsymbol{a}_{i_1}(\boldsymbol{x})}{\norm{\boldsymbol{a}_{i_1}(\boldsymbol{x})}}$ must not stay invariant over $[0,T]$ for $\boldsymbol{x}_d$ estimation to be possible.
		\item If $\boldsymbol{\theta}=k_p$, then $G(\boldsymbol{x})\coloneqq -V_2V_2^T(\boldsymbol{x}-\boldsymbol{x}_d)$ and $f(\boldsymbol{x})\coloneqq V_1\Sigma_r^{-1}U_1^T\boldsymbol{b}_{ac} $ so that $\boldsymbol{u}^*_{k_p}=G(\boldsymbol{x})k_p+f(\boldsymbol{x})$.  If $(\boldsymbol{x}-\boldsymbol{x}_d) \parallel V_1$ $\implies$ $(\boldsymbol{x}-\boldsymbol{x}_d) \perp V_2$ then $G(\boldsymbol{x}) \equiv \boldsymbol{0}$. So if $(\boldsymbol{x}(t)-\boldsymbol{x}_d) \parallel V_1(\boldsymbol{x}(t)) \mbox{  } \forall t \in [0,T] $ then the \textbf{IE} condition (Def. \ref{IE})  for gain identification will not be satisfied. \vspace{-0.42cm}
	\end{enumerate} 
\end{proof}

\begin{figure*}
	\centering     
	\subfigure[$t=0.02s$]{\label{fig:a1}\includegraphics[width=0.24\textwidth]{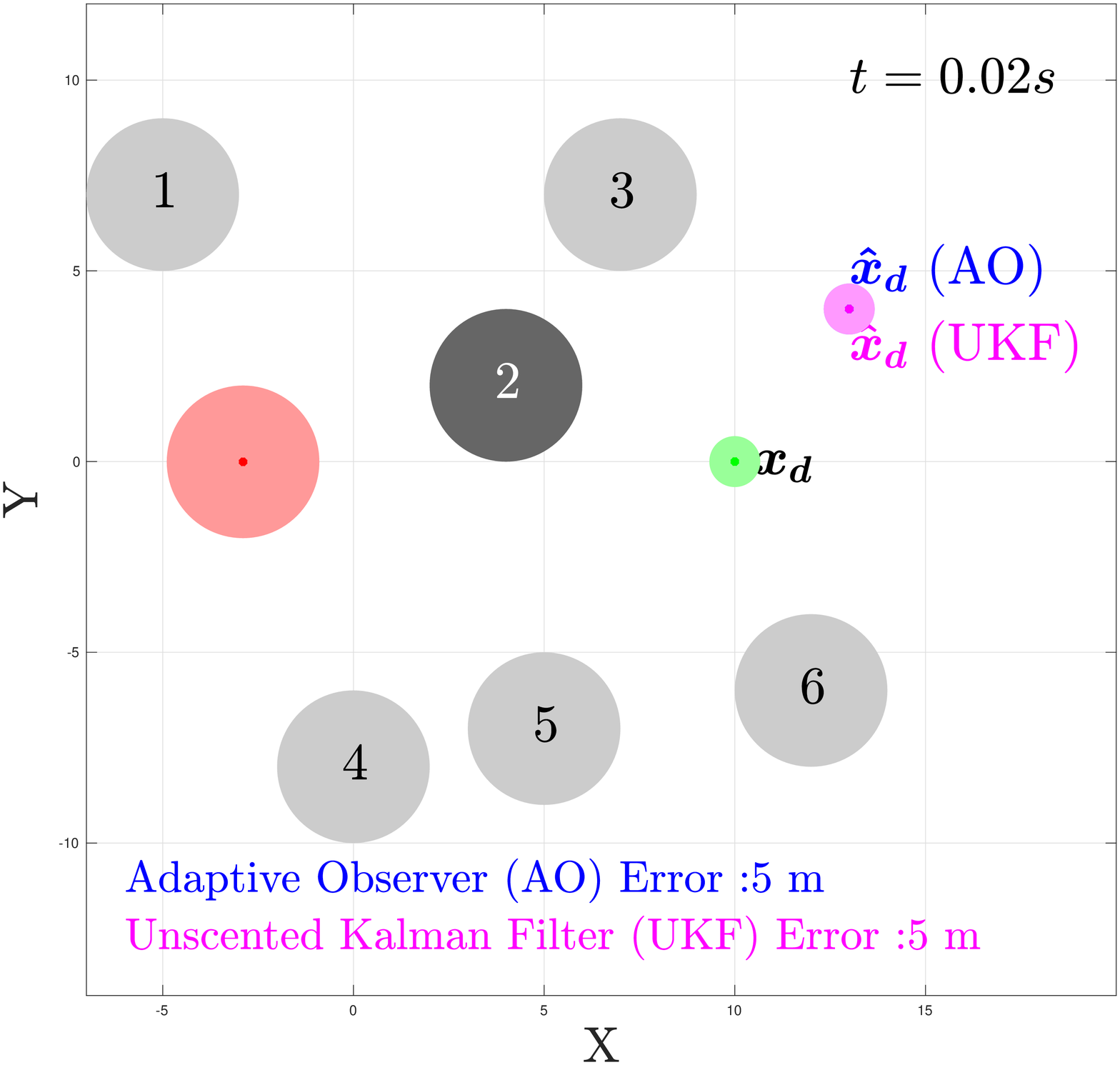}}
	\subfigure[$t=1s$]{\label{fig:b1}\includegraphics[width=0.24\textwidth]{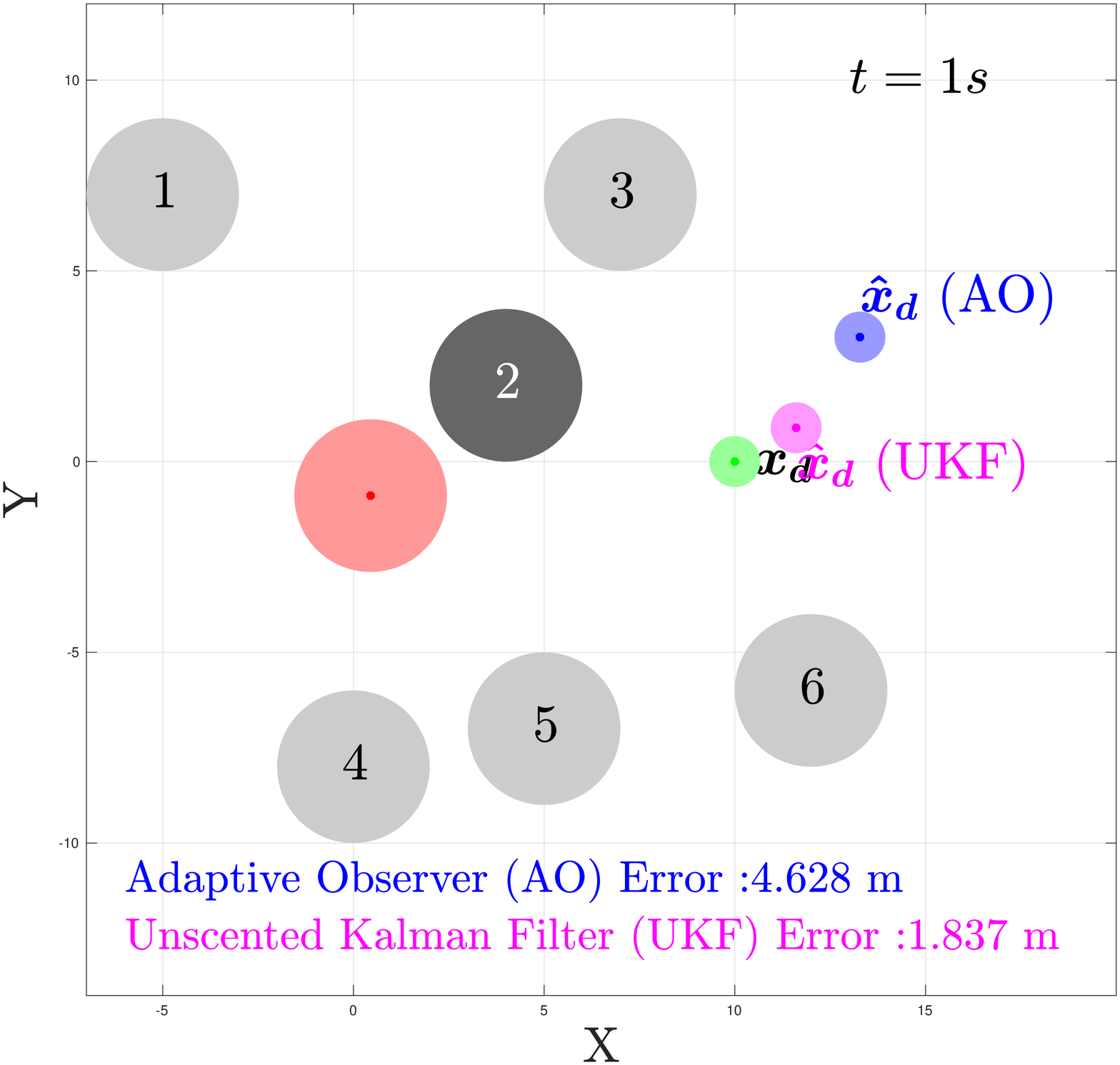}}
	\subfigure[$t=3s$]{\label{fig:c1}\includegraphics[width=0.24\textwidth]{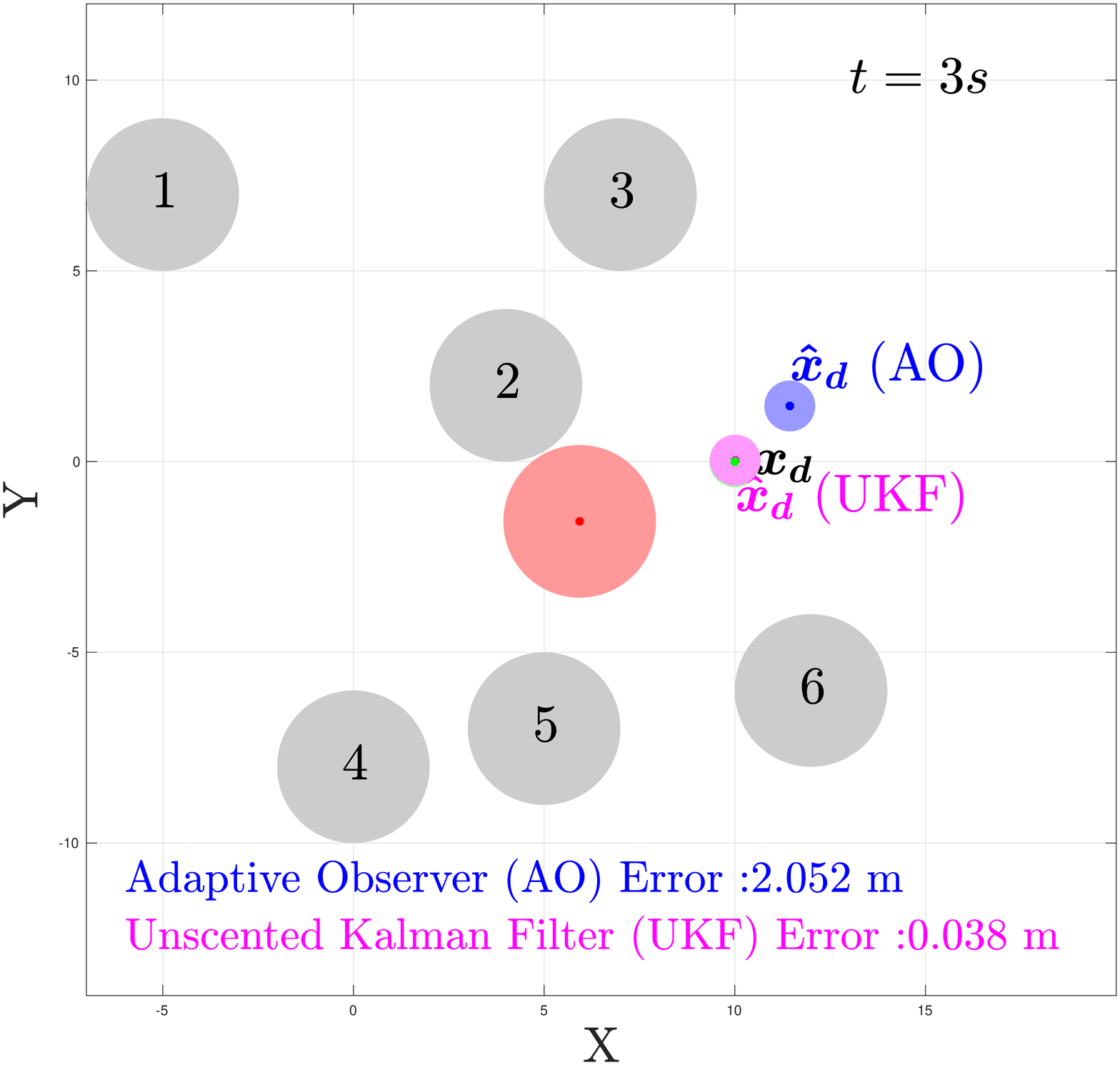}}
	\subfigure[Adaptive Observer and UKF]{\label{fig:d1}\includegraphics[width=0.24\textwidth]{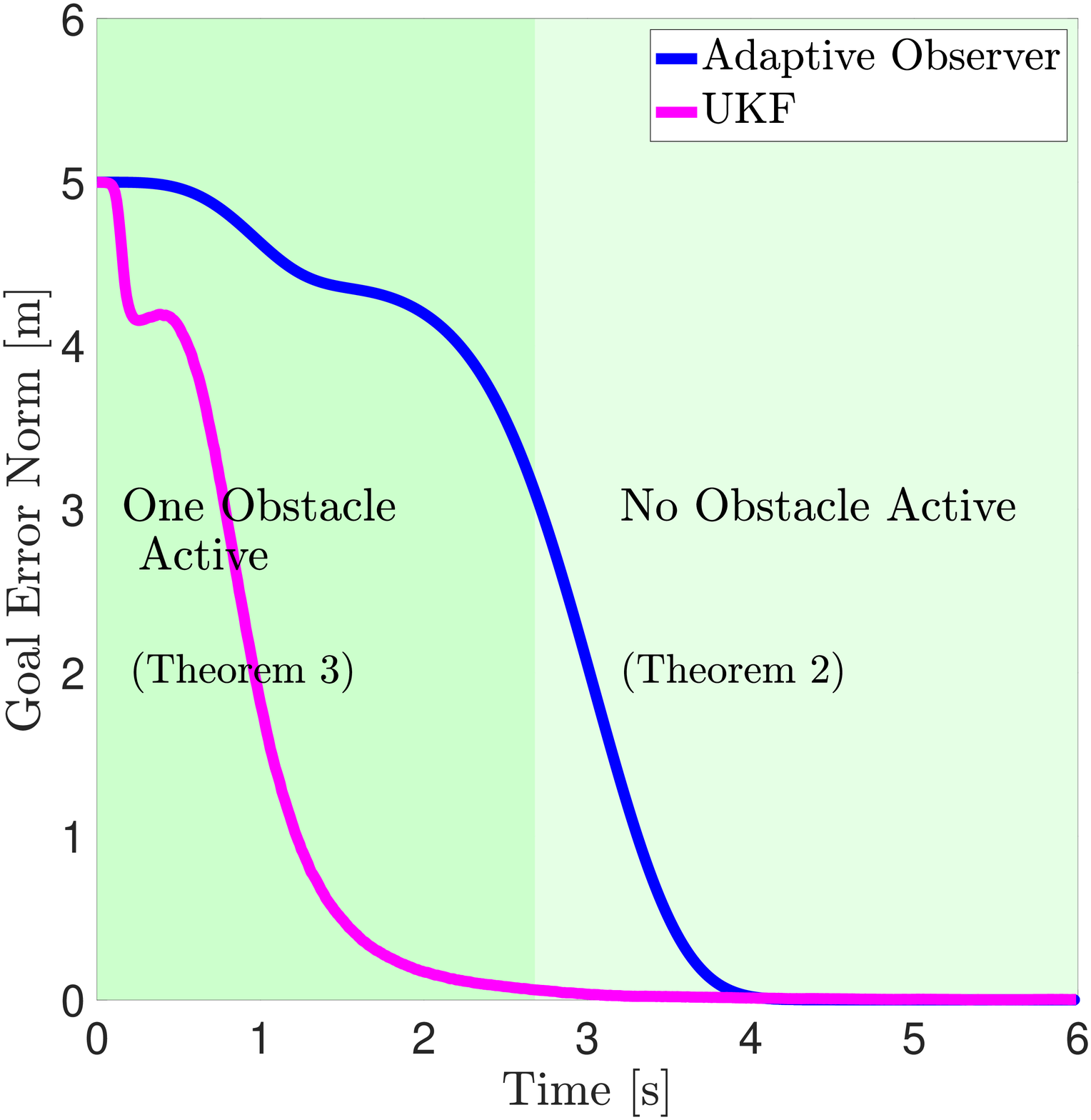}}
	\caption{(a)-(c) A robot navigating to its goal (green) amongst static obstacles. Dark disc represents an active obstacle. Estimates of goal using AO and UKF are shown in blue and pink discs. (d) The norms of goal position errors for AO and UKF. Since atmost one obstacle is active, estimation errors converge to zero. Video at \url{https://youtu.be/KZ9GfT0J-e4}}
	\label{oneactive}
\end{figure*}

\begin{figure*}
		\setlength{\belowcaptionskip}{-14pt}
	\centering     
	\subfigure[$t=0.02s$]{\label{fig:a2}\includegraphics[width=0.24\textwidth,]{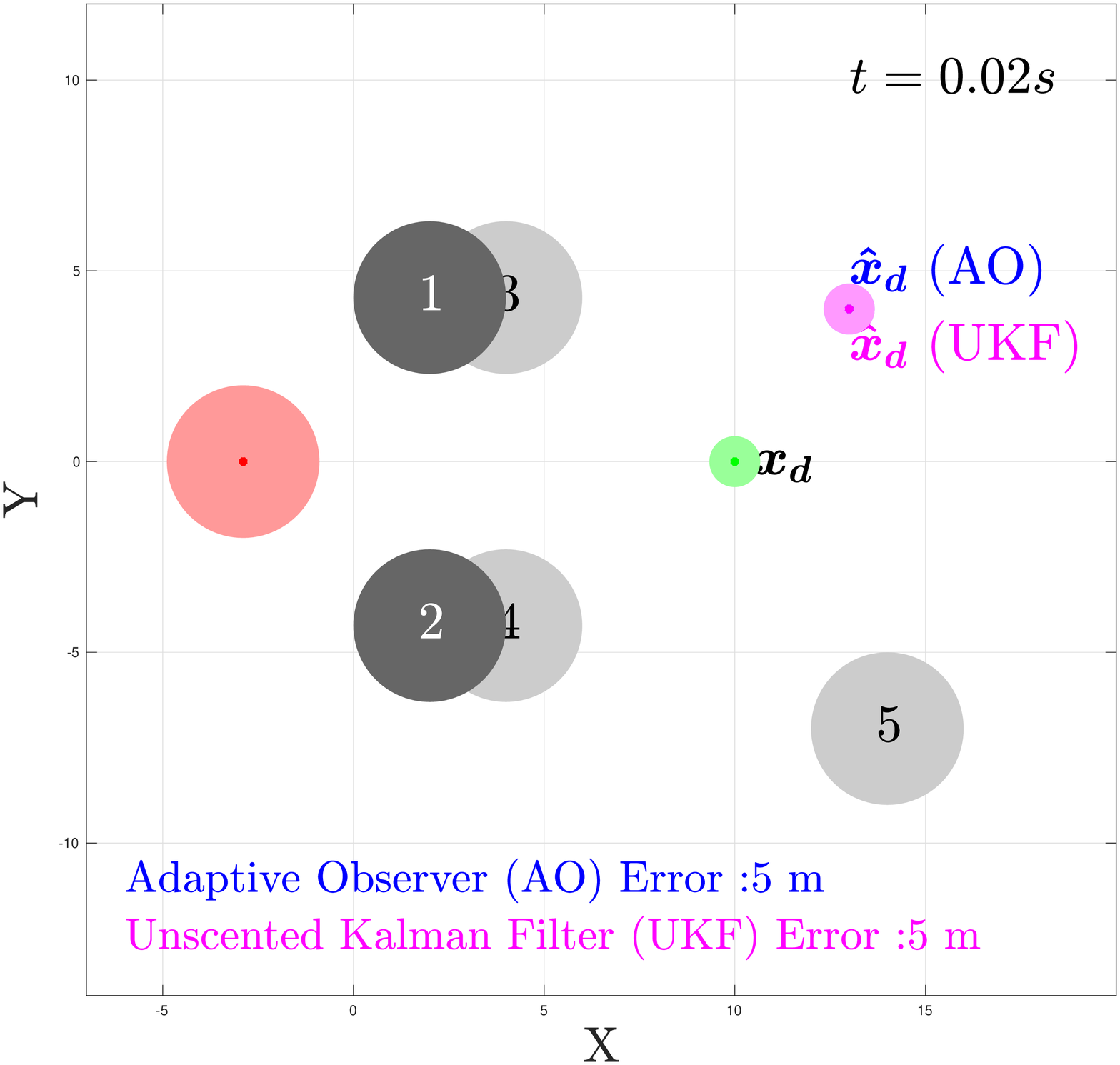}}
	\subfigure[$t=1.5s$]{\label{fig:b2}\includegraphics[width=0.24\textwidth]{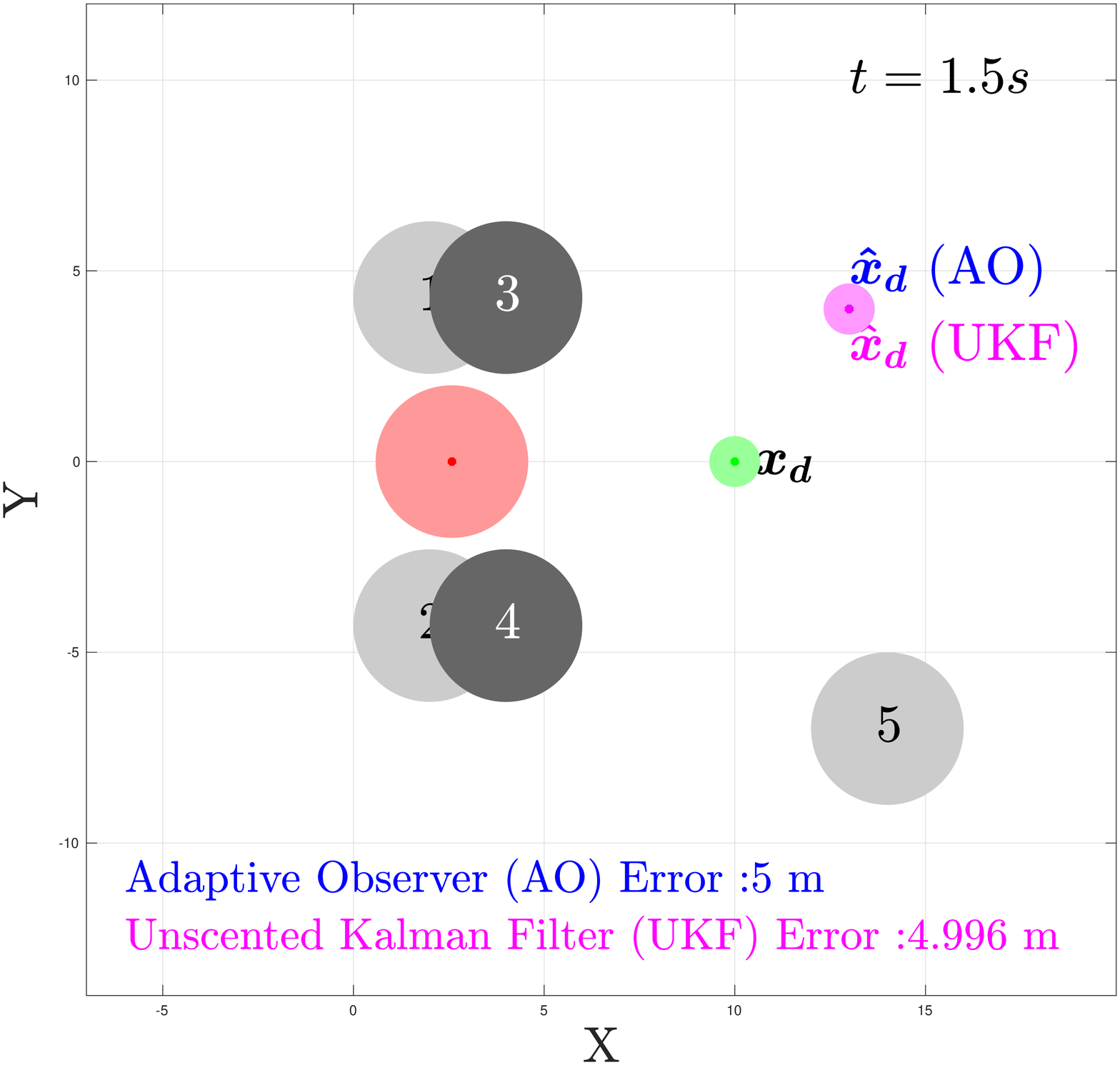}}
	\subfigure[$t=3s$]{\label{fig:c2}\includegraphics[width=0.24\textwidth]{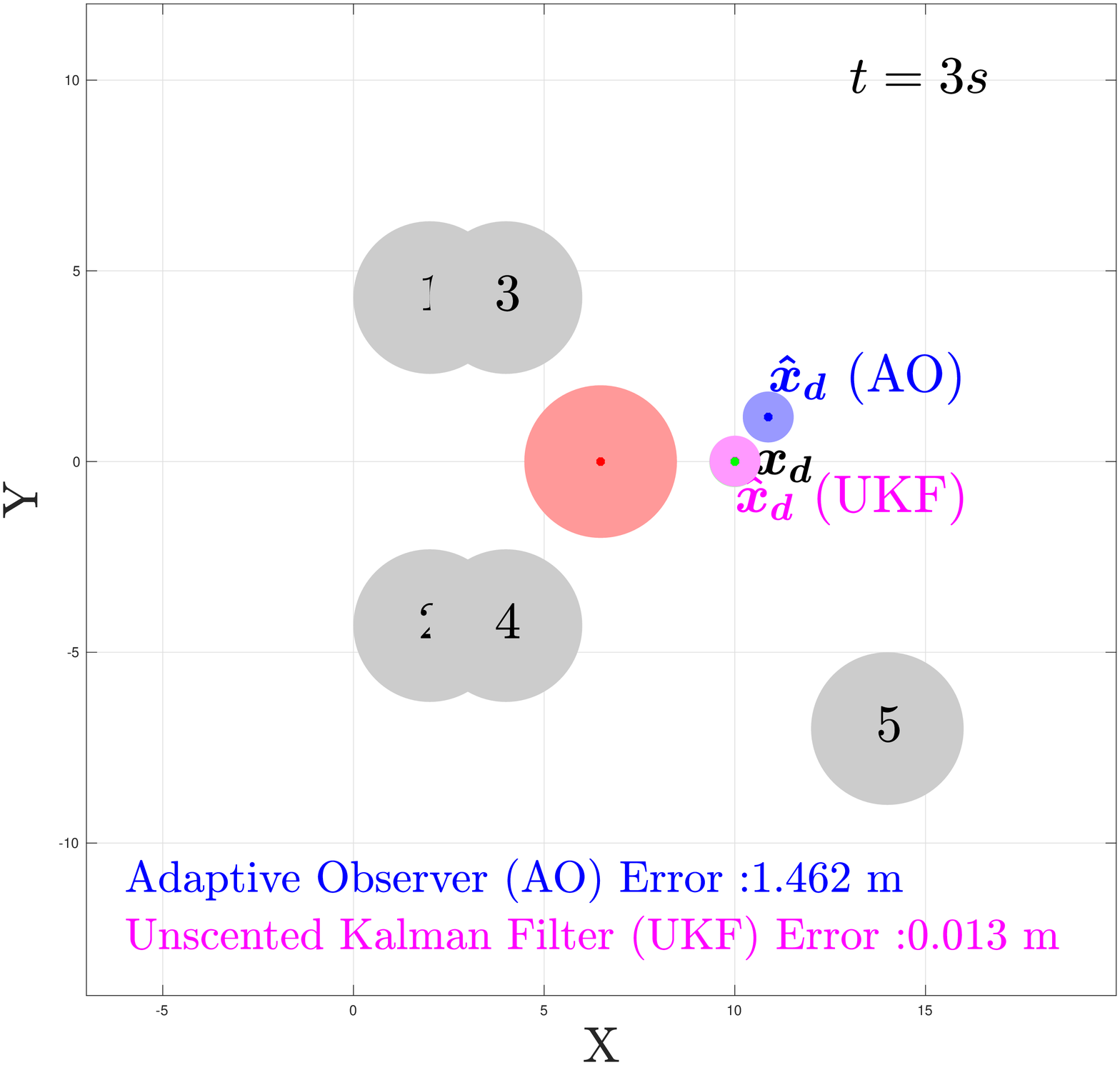}}
	\subfigure[Adaptive Observer and UKF]{\label{fig:d2}\includegraphics[width=0.24\textwidth]{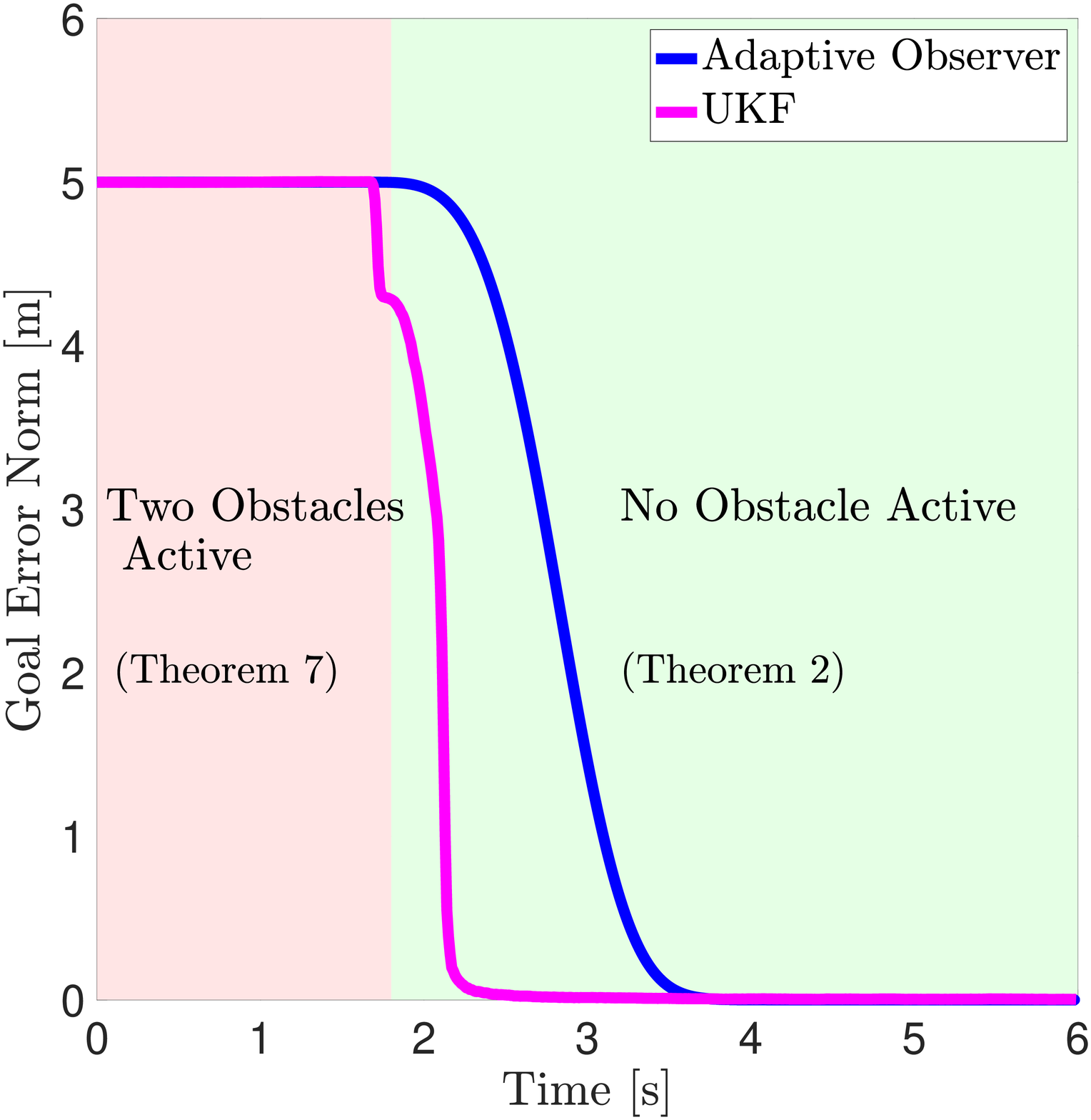}}
	\caption{(a)-(c) Goal identification for a robot navigating amongst static obstacles. (d) The red patch represents the duration in which identification is not supposed to work, because two obstacles are active.  After $t\sim1.8s$, no obstacles are active, hence estimation errors begin to converge to zero. Video at \url{https://youtu.be/baZNP0qZQrY}}
	\label{twoactive}
\end{figure*}
\subsection{$2\leq K \leq M $ and $\texttt{rank}(A_{ac}(\boldsymbol{x}))=2$}
\label{caseD}
Now consider the case where  \textit{two} of $K$ constraints are linearly independent, while the remaining $K-2$ constraints are linear combinations of these two. In this case,  there are fewer degrees of freedom in control than the number of independent active obstacles to avoid, hence $ \boldsymbol{u}^*_{\boldsymbol{\theta}}$ is completely determined by these constraints and does not depend on $\hat{\boldsymbol{u}}$, and by extension, neither on $k_p,\boldsymbol{x}_d$.  We formally demonstrate this claim as follows.  Let $i_1,i_2,\cdots,i_K \in \{1,2,\cdots,M\}$ be the indices of the $K$ constraints that are active. These constraints satisfy \eqref{abK} except that here $\texttt{rank}(A_{ac}(\boldsymbol{x}))=2$.  This problem is overdetermined but not ill-posed because we know that by construction \eqref{abK} is consistent. Its solution is given by solving the least-squares problem
\begin{align}
\label{u_determine}
\begin{aligned}
\boldsymbol{u}^*_{\boldsymbol{\theta}}(\boldsymbol{x}) &= \underset{\boldsymbol{u}}{\arg\min}
\norm{A_{ac}(\boldsymbol{x})\boldsymbol{u}-\boldsymbol{b}_{ac}(\boldsymbol{x})}_2^2 \\
&=A^{\dagger}_{ac}(\boldsymbol{x})\boldsymbol{b}_{ac}(\boldsymbol{x}),
\end{aligned}
\end{align}
where $A^{\dagger}_{ac}(\boldsymbol{x})$ denotes the Moore-Penrose pseudo inverse defined as $A^{\dagger}_{ac}(\boldsymbol{x}) \coloneqq (A^{T}_{ac}(\boldsymbol{x})A_{ac}(\boldsymbol{x}))^{-1}A^{T}_{ac}(\boldsymbol{x})$. When $K=2$,  $A^{\dagger}_{ac}(\boldsymbol{x}) \equiv A^{-1}_{ac}(\boldsymbol{x})$. Since neither $A^{\dagger}_{ac}(\boldsymbol{x})$ nor $\boldsymbol{b}_{ac}(\boldsymbol{x})$ depend on $k_p,\boldsymbol{x}_d$ \eqref{Ab_static}, parameter inference is \textbf{not} possible. 
\begin{theorem}
	\label{theorem_caseD}
	If $\forall t \in [0,T]$, two or more than two constraints are active and two of these constraints are linearly independent, then the observer \textbf{cannot} estimate either the goal or the gain, using $\boldsymbol{x}(t),\boldsymbol{u}^*(\boldsymbol{x}(t)) \mbox{ } \forall t \in [0,T]$. 
\end{theorem}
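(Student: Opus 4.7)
The plan is to leverage the explicit closed-form expression $\boldsymbol{u}^*_{\boldsymbol{\theta}}(\boldsymbol{x}) = A^{\dagger}_{ac}(\boldsymbol{x})\boldsymbol{b}_{ac}(\boldsymbol{x})$ derived in \eqref{u_determine}, which holds in exactly this rank-2 regime. The crucial observation is structural: by \eqref{Ab_static}, every entry of $A_{ac}(\boldsymbol{x})$ and $\boldsymbol{b}_{ac}(\boldsymbol{x})$ is built solely from the ego position $\boldsymbol{x}$ and the obstacle positions $\boldsymbol{x}^o_j$, with no occurrence of $k_p$ or $\boldsymbol{x}_d$. Consequently the entire right-hand side of \eqref{u_determine} is a function of state alone and carries no information about either task parameter.

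My proof would proceed by casting this into the parameter-affine template required by the estimation framework of Section \ref{AdaptiveObserverAlgorithm}. For the goal-estimation case ($\boldsymbol{\theta}=\boldsymbol{x}_d$) I would write $\boldsymbol{u}^*_{\boldsymbol{x}_d} = G(\boldsymbol{x})\boldsymbol{x}_d + f(\boldsymbol{x})$ with $G(\boldsymbol{x})\equiv 0^{2\times 2}$ and $f(\boldsymbol{x})\coloneqq A^{\dagger}_{ac}(\boldsymbol{x})\boldsymbol{b}_{ac}(\boldsymbol{x})$. For the gain-estimation case ($\boldsymbol{\theta}=k_p$) the analogous decomposition takes $G(\boldsymbol{x})\equiv 0^{2\times 1}$ with the same $f(\boldsymbol{x})$. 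These are the only admissible decompositions because any nonzero $G$ would inject parameter dependence that the closed-form solution demonstrably lacks.

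The finishing step invokes Lemma \ref{lemma1}. Since $G(\boldsymbol{x})\equiv 0$ identically, $\mathcal{N}(G(\boldsymbol{x}(t))) = \mathbb{R}^p$ for all $t$, which is trivially time-invariant; in particular any fixed nonzero $\textbf{v}\in\mathbb{R}^p$ lies in this null space for every $t\in[0,T]$. By Lemma \ref{lemma1}, $G^T(\boldsymbol{x}(t))$ fails to be \textbf{IE}, so there is no $t_c$ with $Q(t_c)\succ 0$, and the update law \eqref{parameter_update_law} cannot uniquely recover $\boldsymbol{\theta}$. Equivalently, following the intuition developed after Lemma \ref{lemma1}, any candidate parameter $\boldsymbol{\theta}+\alpha\textbf{v}$ reproduces the identical trajectory $\boldsymbol{x}(t)$, rendering unique identification impossible.

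I expect no serious obstacle: once \eqref{u_determine} is in hand, the argument is essentially a one-line parameter-independence observation combined with a direct application of Lemma \ref{lemma1}. The only subtlety worth stating explicitly is that \eqref{u_determine} is the \emph{unique} primal solution of the EQP \eqref{optimization_formulation_2_static_obstacles} in this rank-2 regime (the feasible set is either a singleton when $K=2$ or, when $K>2$, the consistent overdetermined system admits a unique minimizer via the pseudoinverse), so there is no hidden $\boldsymbol{\theta}$-dependent degree of freedom that could sneak back into $\boldsymbol{u}^*_{\boldsymbol{\theta}}$ through a null-space term $V_2 V_2^T \hat{\boldsymbol{u}}$ as happened in Theorems \ref{theorem_caseB} and \ref{theorem_caseC}. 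This absence of a free direction is precisely the geometric content of the theorem: the ego robot's two translational degrees of freedom are fully consumed by two independent avoidance constraints, leaving nothing for the goal-seeking objective to shape.
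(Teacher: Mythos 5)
Your proposal is correct and follows essentially the same route as the paper: it uses the closed-form solution $\boldsymbol{u}^*_{\boldsymbol{\theta}}=A^{\dagger}_{ac}(\boldsymbol{x})\boldsymbol{b}_{ac}(\boldsymbol{x})$ from \eqref{u_determine}, observes that neither $A^{\dagger}_{ac}$ nor $\boldsymbol{b}_{ac}$ involves $k_p$ or $\boldsymbol{x}_d$, sets $G(\boldsymbol{x})\equiv 0$ in the parameter-affine form, and concludes that the \textbf{IE} condition can never hold. Your added remarks on the uniqueness of the solution in the rank-2 regime and the explicit appeal to Lemma \ref{lemma1} are consistent elaborations of the paper's (terser) argument rather than a different method.
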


\begin{proof}
	 \begin{enumerate}[label=(\alph*)]
	     \item For $\boldsymbol{\theta}=\boldsymbol{x}_d$, define $G(\boldsymbol{x})\coloneqq 0^{2 \times 2}$, $f(\boldsymbol{x})\coloneqq A^\dagger_{ac}(\boldsymbol{x})\boldsymbol{b}(\boldsymbol{x})$ using \eqref{u_determine}. Therefore, the \textbf{IE} condition (Def. \ref{IE} and \eqref{GisPE}) is never satisfied.
	      \item For $\boldsymbol{\theta}=k_p$, define $G(\boldsymbol{x})\coloneqq \boldsymbol{0}$, $f(\boldsymbol{x})\coloneqq A^\dagger_{ac}(\boldsymbol{x})\boldsymbol{b}(\boldsymbol{x})$. The \textbf{IE} condition (Def. \ref{IE} and \eqref{GisPE}) is never satisfied.
	       \vspace{-0.40cm}
	 \end{enumerate} 
\end{proof}

\section{Simulation Results}
\label{Results}

In this section, we present results for estimation of goals for a multirobot system. We consider two online estimators, (a) an Unscented Kalman Filter (UKF) as a baseline and (b) an adaptive observer (AO) in section \ref{AdaptiveObserverAlgorithm} based on equations  \eqref{original} to \eqref{parameter_update_law}. $G(\boldsymbol{x}),f(\boldsymbol{x})$ for AO are chosen per theorems \ref{theorem_caseA}-\ref{theorem_caseD} by checking how many obstacles are active at a given time. The AO converges only when the necessary conditions in these theorems are satisfied. As for UKF, it doesn't require such explicit dynamics but then there are also no guarantees for convergence.  To substantiate this,  we first show simulations for a single robot navigating towards its goal in an environment consisting of static obstacles. In Figs. \ref{fig:a1}-\ref{fig:c1}, an ego robot (red) is trying to reach its goal shown in green. As the robot moves to the right, obstacle two remains active until $t=2.8s$. While there are six obstacles, only one of them is active in this duration, hence theorem \ref{theorem_caseB} guarantees reduction in goal estimation error using AO.  This is shown in the dark green left panel of Fig. \ref{fig:d1}. For $t>2.8s$, no obstacle is active hence theorem \ref{theorem_caseA} ensures that the goal estimation error converges to zero using AO as evident in Fig \ref{fig:d1}. A similar trend is obtained using UKF.

In Figs. \ref{fig:a2}-\ref{fig:c2}, the same robot is trying to reach its goal in green. We have purposefully positioned the obstacles in such a way that as the robot moves, obstacle one and two are active until $t=1.08s$, at which point, obstacle three and four become active, and stay so until $t=1.8s$. Thus until $t=1.8s$, two obstacles are always active. Hence, from theorem \ref{theorem_caseD}, robot dynamics do not depend on the goal location.  As expected, the AO does not update its estimate as can be seen in the red panel of Fig. \ref{fig:d2}, where goal error does not decrease. \textit{Interestingly, this is also true for UKF, which empirically speaks to the fact that convergence of estimation error is agnostic to the choice of estimator}, which is because robot dynamics itself do not depend on the goal for $t<1.8s$ However, this claim requires formal analysis. Thereafter, no obstacle is active, hence the estimation errors converge to zero as is evident from the green panel in Fig. \ref{fig:d2}.

Finally, we consider a multirobot system in Fig. \ref{MultirobotSnapshots} in which we run parallel estimators synchronously. To ensure that the snapshots are legible, we only highlight the ego robot (\textit{i.e.} robot 2), while other robots are light grey or dark depending on whether they are active or inactive for the ego robot. In this simulation, there are times when one robot is active (Fig. \ref{fig:a3}), two are active (Fig. \ref{fig:b3}) and none are active (Fig. \ref{fig:c3}). The estimation errors are shown in Fig.  \ref{fig:MultirobotGraphComparison}. The grey curves correspond to the non-ego robots and the blue (AO) and pink (UKF) curves are for the ego robot. Since all the curves converge to zero, the estimates of goals for all robots converge to their true goals.
\begin{figure*}
	\setlength{\belowcaptionskip}{-15pt}
	\centering     
	\subfigure[$t=0.02s$]{\label{fig:a3}\includegraphics[width=0.24\textwidth]{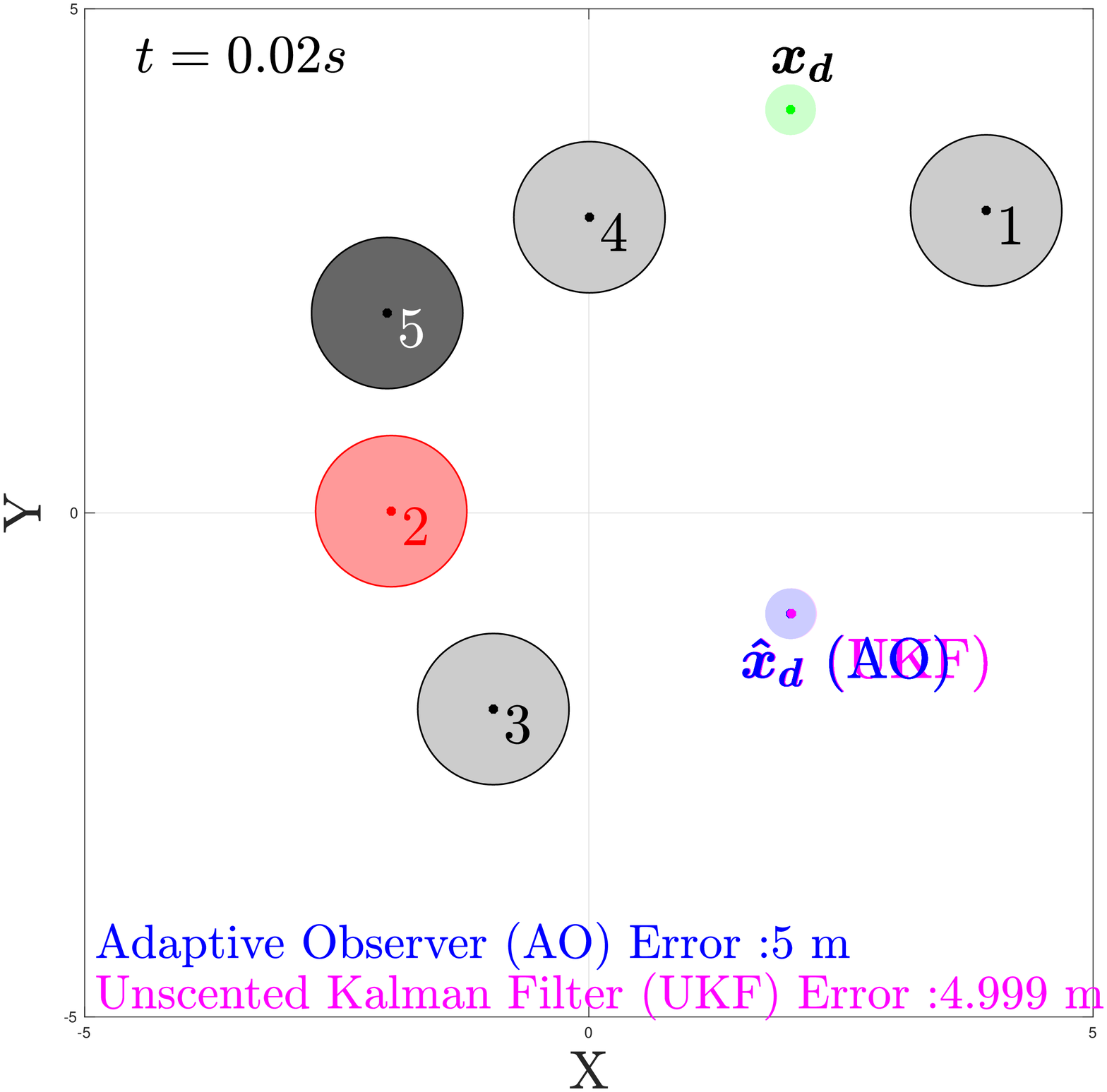}}
	\subfigure[$t=2s$]{\label{fig:b3}\includegraphics[width=0.24\textwidth]{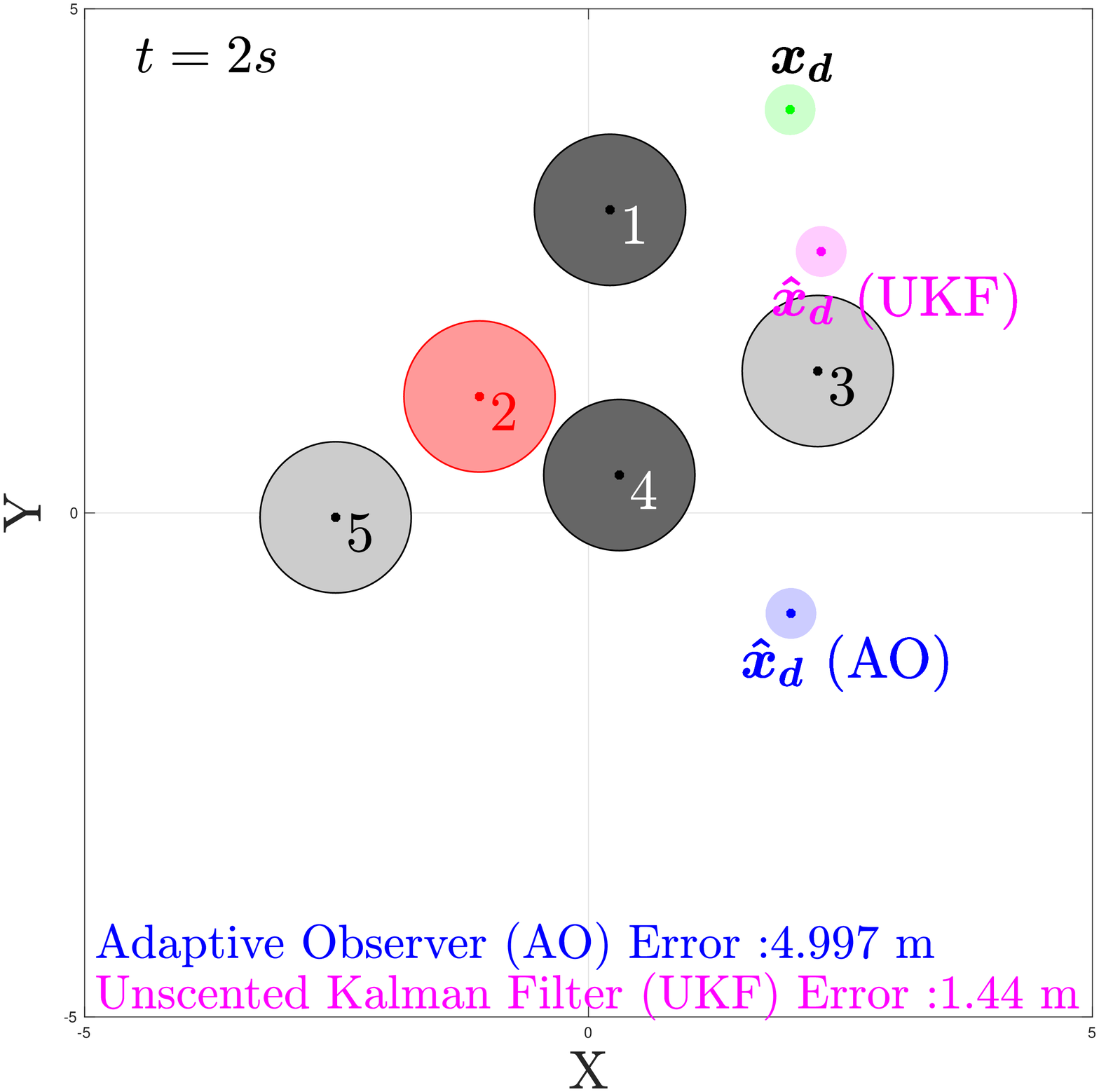}}
	\subfigure[$t=4s$]{\label{fig:c3}\includegraphics[width=0.24\textwidth]{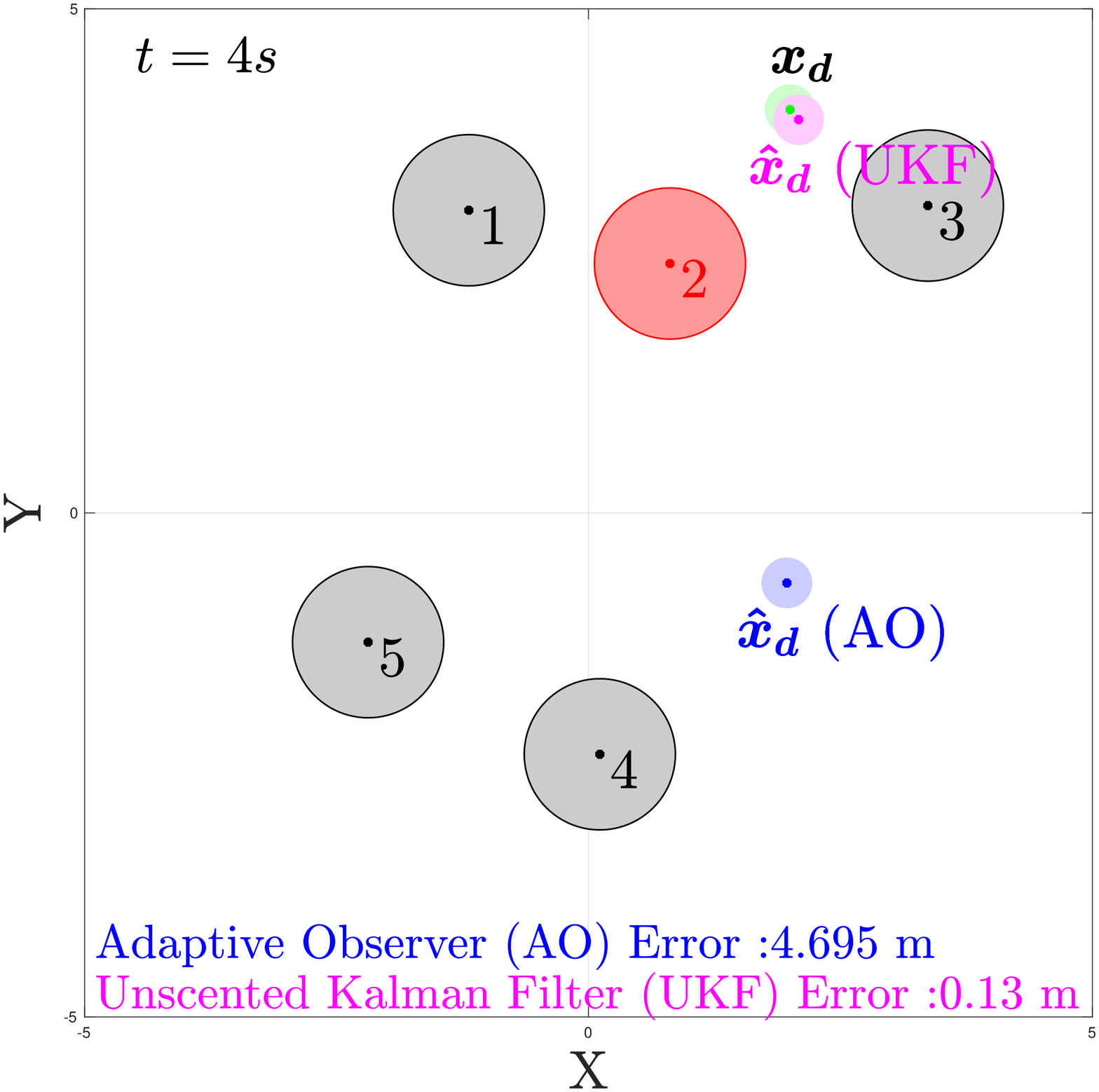}}
	\subfigure[$t=6s$]{\label{fig:d3}\includegraphics[width=0.24\textwidth]{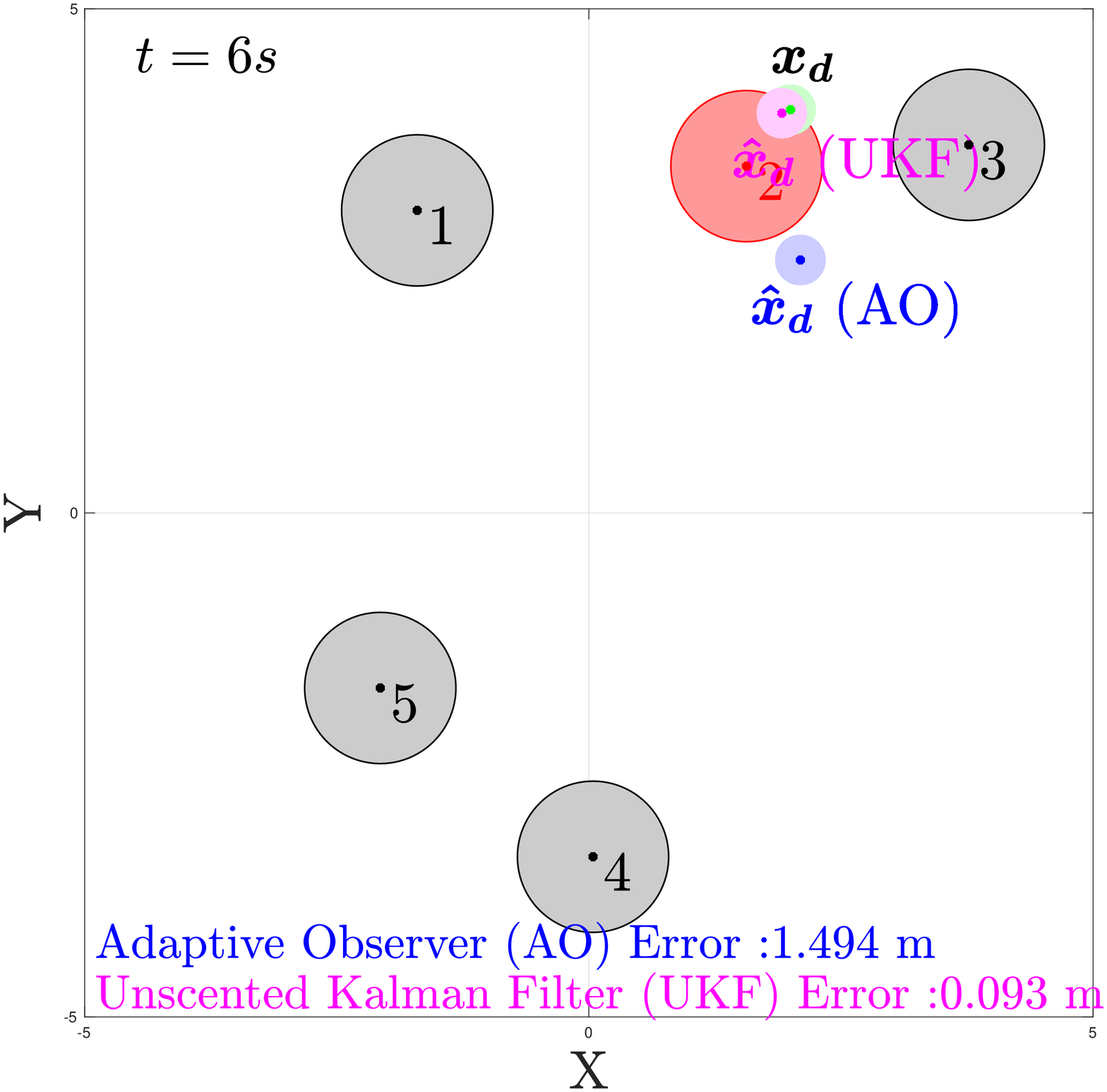}}
	\caption{(a)-(c) Goal estimation for a multirobot system. We highlight the ego robot (red) for legibility. AO and UKF estimates are shown in blue and pink discs respectively. Video at \url{https://youtu.be/-8qpBL5v4WY} }
	\label{MultirobotSnapshots}
\end{figure*}
\begin{figure}
	\centering
	\setlength{\belowcaptionskip}{-15pt}
	\includegraphics[width=\columnwidth]{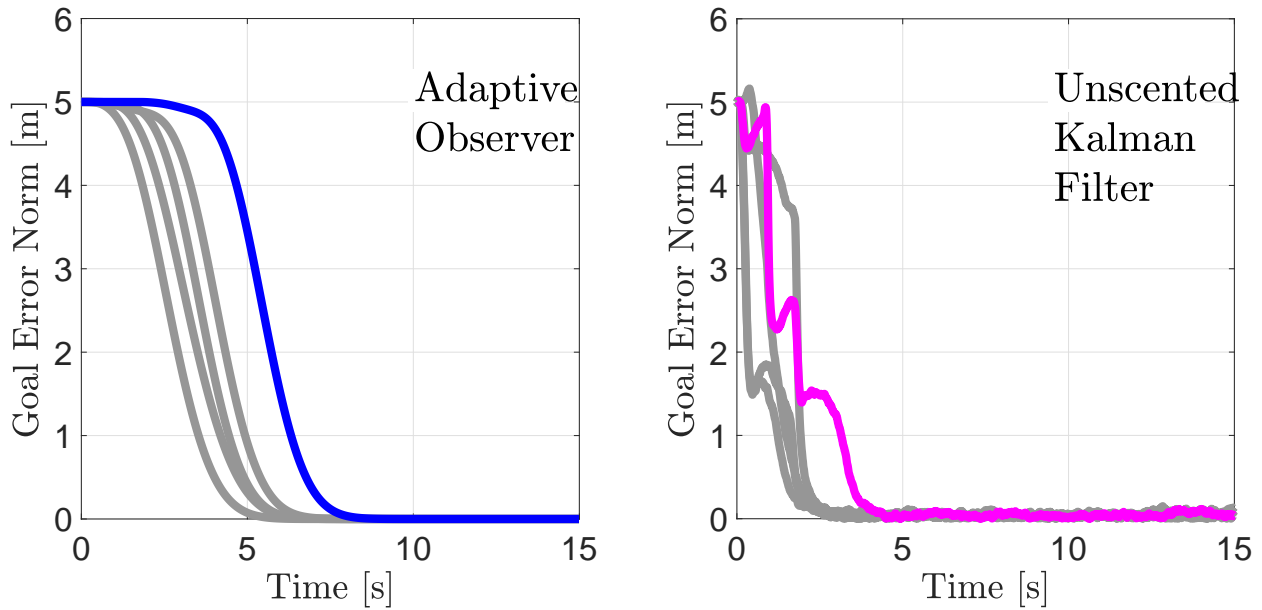}
	\caption{Estimation errors as a function of time for AO (left) and UKF (right). The highlighted curves are for the ego robot while the gray ones are for other robots.}
	\label{fig:MultirobotGraphComparison}
\end{figure}
\section{Conclusions}
\label{Conclusions}
In this paper, we developed the mathematical framework for observer based idenfication of goals and gains for a multirobot system by borrowing ideas from system identification. Since these robots use optimization in the feedback loop, their dynamics do not explicitly depend on parameters which makes the application of previously developed identifiability conditions non-trivial. We used duality theory to derive explicit relations between parameters and dynamics to derive identifiability conditions. The message that our theorems convey is that as the number of robots that an ego robot interacts with increases, estimation of ego's parameters becomes difficult because with more interactions, the ego robot's motion is expended in avoiding collisions which it achieves by sacrificing task performance. Our theory is fairly general, we intend to demonstrate this for inference of other multirobot tasks in future work. We also intend to apply this for estimation of parameters of a human performing a task and deduce what interventions from an external robot can make identification of human's task easier. This will have widespread applications in human robot interaction.
\bibliographystyle{ieeetr}
\bibliography{cmu}

\end{document}